\newtheorem{theorem}{Theorem}
\newtheorem{lemma}[theorem]{Lemma}
\newcommand{\Res}{\mathrm{Res}}
\newcommand{\PGL}{\mathrm{PGL}}
\newcommand{\PSL}{\mathrm{PSL}}
\newcommand{\isom}{\cong}
\newcommand{\disc}{\mathrm{disc}}
\newcommand{\EE}{\mathrm{E}}
\newcommand{\FF}{\mathbb{F}}
\newcommand{\HH}{\mathbb{H}}
\newcommand{\RR}{\mathbb{R}}
\newcommand{\CC}{\mathbb{C}}
\newcommand{\NN}{\mathbb{N}}
\newcommand{\PP}{\mathbb{P}}
\newcommand{\QQ}{\mathbb{Q}}
\newcommand{\ZZ}{\mathbb{Z}}
\newcommand{\QQbar}{\overline{\QQ}}
\newcommand{\Frob}{\mathrm{Frob}}
\newcommand{\Gal}{\mathrm{Gal}}
\newcommand{\GalQQ}{\Gal(\QQbar/\QQ)}
\newcommand{\Hom}{\mathrm{Hom}}
\newcommand{\Cl}{{\mathcal{C}\!\ell}}
\newcommand{\GL}{\mathrm{GL}}
\newcommand{\rO}{\mathrm{O}}
\newcommand{\SO}{\mathrm{SO}}
\newcommand{\SU}{\mathrm{SU}}
\newcommand{\USp}{\mathrm{USp}}
\newcommand{\cC}{\mathcal{C}}
\newcommand{\cG}{\mathcal{G}}
\newcommand{\cO}{\mathcal{O}}
\newcommand{\cP}{\mathcal{P}}
\renewcommand{\cR}{\mathfrak{R}}
\newcommand{\cX}{\mathcal{X}}
\newcommand{\ee}{\mathbf{e}}
\newcommand{\mm}{\mathfrak{m}}
\newcommand{\pp}{\mathfrak{p}}
\newcommand{\fP}{\mathfrak{P}}
\newcommand{\bbmu}{\boldsymbol{\mu}}
\newcommand{\ccomma}{\raisebox{0.4ex}{,}}
\newcommand{\ignore}[1]{}
\newcommand{\longversion}[2]{#1} 
\newcommand{\Alt}[1]{{\mathcal{A}}_{#1}}
\newcommand{\Sym}[1]{{\mathcal{S}}_{#1}}
\title{Arithmetic statistics of Galois groups}
\author{David Kohel}
\address{Aix Marseille Univ, CNRS, Centrale Marseille, I2M, Marseille, France}
\email{David.Kohel@univ-amu.fr}
\begin{document}

\maketitle

\begin{abstract}
We develop a computational framework for the statistical characterization of Galois
characters with finite image, with application to characterizing Galois groups and
establishing equivalence of characters of finite images of $\GalQQ$.
\end{abstract}

\section{Introduction}

The absolute Galois group $\cG = \GalQQ$ is a fundamental object of study in number theory.
The objective of this work is to develop an explicit computational framework for the
study of its finite quotients.  We may replace $\cG$ with the absolute Galois group of any
global field, but restrict to that of $\QQ$ for simplicity of exposition.

As point of departure, we consider an irreducible polynomial $f(x) \in \ZZ[x]$ of degree $n$
as input.  We set $K = \QQ[x]/(f(x))$, denote by $L$ its normal closure and by $\cG(K)$
the Galois group $\Gal(L/\QQ)$ equipped with a permutation representation in $\Sym{n}$
determined by the action on the roots of $f(x)$.  Let $\cP_S(\ZZ)$ be the set of primes,
coprime to the finite set $S$ of primes ramified in $\ZZ[x]/(f(x))$.

The statistical perspective we develop expresses the map from $\cP_S(\ZZ)$ to factorization
data as an equidistributed map to a finite set $\cX(K)$ equipped with a probability function
induced from the Haar measure on $\cG(K)$.
A Frobenius lift at $p$, defined up to conjugacy, acts on the roots of $f(x)$.
The permutation action on the roots of $f(x)$ induces a representation in $\rO(n)$,
fixing the formal sum of the roots.
The orthogonal complement gives the standard representation in $\rO(n-1)$,
spanned by differences of basis elements.  Let $P(x)$ be the characteristic polynomial
of Frobenius in the permutation representation and
$$
S(x) = P(x)/(x-1) = x^{n-1} - s_1 x^{n-2} + \cdots + (-1)^{n-1} s_{n-1}.
$$
the characteristic polynomial in the standard representation.  This polynomial
is independent of choices of lift of Frobenius and choice of basis.
As such, the coordinates $(s_1,\dots,s_{n-1}) \in \ZZ^{n-1}$ are invariants of the Frobenius
conjugacy class $\Frob_p$ in the set $\Cl(\cG(K))$ of conjugacy classes of $\cG(K)$.
Denote the finite set of such class points by $\cX(K)$.
We note that the class points are entirely determined by the factorization data of
$f(x) \bmod p$, and $\cX(K) \subset \ZZ^{n-1}$ is equipped with the structure of
a finite probability space, induced from the cover $\Cl(\cG(K)) \rightarrow \cX(K)$.
The irreducible characters are known to form an orthogonal basis for the class functions
on $\Cl(\cG(K))$, and the rational characters are integer-valued class functions
on the class space $\cX(K)$.

In what follows we develop this approach by describing systems of rational characters on
$\cG(K)$ algebraically as a basis of polynomials in $\ZZ[s_1,\dots,s_{n-1}]$ modulo the
defining ideal for $\cX(K)$, together with their associated inner product.
As a consequence we develop algorithms for the characterization of Galois groups, and
more generally, tools for determining equivalence of finite Galois representations.

\section{Representations of orthogonal groups}

Let $G$ be a compact Lie group. In practice, $G$ will be an orthogonal group
$$
G = \rO(n-1) \subset \rO(n) \mbox{ or } G = \SO(n-1) \subset \rO(n-1),
$$
or a finite permutation group, equipped with the standard representation in $\rO(n-1)$,
$$
G \subseteq \Sym{n} \subset \rO(n-1) \mbox{ or } G \subseteq \Alt{n} \subset \SO(n-1).
$$
The standard representation of $\Sym{n}$ provides the motivation for an algebraic
presentation of the character ring of a permutation group.  For the character theory
of permutation groups, we appeal to known algorithms for its computation.

The symmetric group $\Sym{n}$ acts on a set of $n$ elements, and the linear extension to
a basis of $\ZZ^n \subset \RR^n$ gives the {\it permutation representation} of $\Sym{n}$.
Denote a basis $\{\ee_1,\dots,\ee_n\}$.  Since $\ee_1 + \cdots + \ee_n$ is fixed by $\Sym{n}$,
a line is fixed, and we consider the action on the hyperplane spanned by the orthogonal
complement.  In the basis $\{\ee_1-\ee_2,\dots,\ee_{n-1}-\ee_n\}$, we obtain the
{\it standard representation} of $\Sym{n}$ in $\rO(n-1)$.
The choice of basis is noncanonical, but the character theory is independent of any such
choice.  The orthogonal group $\rO(n)$ and its subgroup $\rO(n-1)$ have two connected
components, with principal component $\SO(n-1) \subset \SO(n)$, such that $\Alt{n} =
\Sym{n} \cap \SO(n-1)$.

\subsection*{Representation ring}

For a compact Lie group $G$, we denote the set of conjugacy classes of $G$
by $\Cl(G)$.  We define the representation ring of $G$,
$$
\cR(G) = \bigoplus_\chi \ZZ\chi,
$$
as the free abelian group on irreducible characters $\chi:G \rightarrow \CC$
of finite degree.
We identify addition with direct sum, and thereby the abelian submonoid
$\bigoplus \NN\chi \subseteq \cR(G)$ with characters, and define multiplication
on $\cR(G)$ by the linear extension of tensor product on $\bigoplus \NN\chi$.
We refer to elements of $\cR(G)$ as virtual characters.

As class functions, $\cR(G)$ can be identified with a subring of complex-valued
functions on $\Cl(G)$.  Indeed, when $G$ is finite, the number $h$ of conjugacy
classes (and of irreducible characters) is finite, and the character table is
defined as the evaluation vectors
$$
\left(\chi_i(\cC_1),\dots,\chi_i(\cC_h)\right).
$$
in the ring $\CC^h = \CC \times \cdots \times \CC$, for $\chi_i$ running over
the irreducible characters, forming a generator set for the representation ring.
For a subfield $F \subset \CC$, we denote by $\cR_F(G)$ the subring of $F$-valued
virtual characters.  While $\cR(G) = \cR_\QQ(G)$ for $G = \Sym{n}$ or
$G = \rO(n-1)$, for a general finite group that we may consider, the field
of definition of an irreducible character may be a proper extension of $\QQ$.

Considering the group $\rO(n)$ in $\GL_{n}(\RR)$, an element $g$ satisfies a
characteristic polynomial of the form
$$
x^{n} - s_1 x^{n-1} + \cdots + (-1)^{n} s_{n}.
$$
The coefficient $s_1$ is the trace in its representation on $\RR^{n}$, and
$s_{n}$ is its determinant character.  We note that $s_k$ is an invariant
of the class of $g$, and we can identify $g \mapsto s_k$ as characters.
Specifically, $s_k$ is the character on the $k$-th exterior power
$
\bigwedge\nolimits^k \RR^{n}.
$
We recall the structure of the character ring for $\rO(n)$
(cf.\ Takeuchi~\cite{Takeuchi71}).

\begin{lemma}
The virtual character ring $\cR(O(n))$ is generated by $s_k$, $1 \le k \le n$, and
$$
\cR(\rO(n)) \isom \frac{\ZZ[s_1,\dots,s_{n}]}{(s_k s_{n} - s_{n-k},s_{n}^2 - 1)}\cdot
$$
The restriction $\Res: \cR(\rO(n)) \rightarrow \cR(\SO(n))$ surjects on
$$
\cR(\SO(n)) \isom \frac{\ZZ[s_1,\dots,s_{n}]}{(s_k - s_{n-k},s_{n} - 1)}
$$
with kernel ideal $(s_n-1)$.
\end{lemma}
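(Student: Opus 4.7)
The plan is to realize $s_k$ as the character of the exterior power $\bigwedge^k V$ of the standard representation $V = \RR^n$ of $\rO(n)$, and to derive the two families of relations from the orthogonal structure. An element $g \in \rO(n)$ preserves the inner product, so its complex eigenvalues form a multiset $\{\lambda_1,\dots,\lambda_n\}$ stable under $\lambda \mapsto \lambda^{-1}$. The identity $s_k(\lambda_1^{-1},\dots,\lambda_n^{-1}) = s_{n-k}(\lambda)/s_n(\lambda)$ then yields the pointwise identity $s_k \cdot s_n = s_{n-k}$ on $\rO(n)$, while $s_n = \det$ takes values in $\{\pm 1\}$, giving $s_n^2 = 1$. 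Together these produce a well-defined ring homomorphism $\varphi \colon \ZZ[s_1,\dots,s_n]/(s_k s_n - s_{n-k},\, s_n^2-1) \to \cR(\rO(n))$.

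Next I would verify that $\varphi$ is surjective. Every finite-dimensional complex representation of $\rO(n)$ is a subquotient of some tensor power $V^{\otimes m}$, so its character, as a class function, is a symmetric polynomial in the eigenvalues $\lambda_1,\dots,\lambda_n$. Newton's identities express the power sums $p_j(\lambda) = \mathrm{tr}(g^j)$, and thus all symmetric polynomials, in terms of the elementary symmetric functions $s_1,\dots,s_n$. Consequently every element of $\cR(\rO(n))$ lies in the image of $\varphi$.

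The main obstacle is injectivity, which asserts that no further relations are present. Here I would appeal to the classification of irreducibles of $\rO(n)$ (cf.\ Takeuchi's reference cited in the statement): using the displayed relations one reduces every element to a canonical form $f(s_1,\dots,s_{\lfloor n/2 \rfloor}) + s_n \cdot g(s_1,\dots,s_{\lfloor n/2 \rfloor})$, reflecting the coset decomposition $\rO(n) = \SO(n) \sqcup \det{}^{-1}(-1)$. One then matches this rank-two module structure to the bijection between irreducible $\rO(n)$-characters and pairs consisting of a partition of depth at most $\lfloor n/2 \rfloor$ together with a determinant sign, yielding injectivity of $\varphi$.

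Finally, for $\SO(n)$ the restriction map sends $s_n = \det$ to $1$, so $(s_n-1) \subseteq \ker \Res$. Substituting $s_n = 1$ in the presentation of $\cR(\rO(n))$ collapses $s_k s_n = s_{n-k}$ and $s_n^2 = 1$ into the single family $s_k = s_{n-k}$, giving the candidate presentation of $\cR(\SO(n))$. Applying the same comparison with the classification of irreducibles of $\SO(n)$ identifies this quotient with $\cR(\SO(n))$ and shows that $\ker \Res = (s_n - 1)$ exactly.
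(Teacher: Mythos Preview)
The paper does not give a proof of this lemma; it is recorded with a citation to Takeuchi and used thereafter as input. Your sketch therefore supplies more than the paper itself does.

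For $\cR(\rO(n))$ your outline is sound. The derivation of $s_k s_n = s_{n-k}$ and $s_n^2 = 1$ from the inversion-stability of the eigenvalue multiset is correct, and surjectivity follows from faithfulness of $V$ together with the fundamental theorem of symmetric functions. Injectivity is where the real content lies, and you correctly locate it in the classification of irreducibles of $\rO(n)$; deferring that step to Takeuchi is exactly the paper's own reliance on that reference.

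One point to flag on the $\SO(n)$ clause: for $n = 2m$ even, the restriction $\cR(\rO(2m)) \to \cR(\SO(2m))$ is \emph{not} surjective. Over $\SO(2m)$ the middle exterior power $\bigwedge^m V$ splits into two irreducibles $\bigwedge^m_\pm V$ interchanged by the outer automorphism, and neither summand lies in $\ZZ[s_1,\dots,s_m]$. (Already for $\SO(2) = S^1$ one has $\cR(\SO(2)) = \ZZ[z,z^{-1}]$, while $s_1 = z + z^{-1}$ generates only the subring of Laurent polynomials invariant under $z \mapsto z^{-1}$.) Thus your proposed final step, ``applying the same comparison with the classification of irreducibles of $\SO(n)$,'' would, if actually carried out for even $n$, reveal that the displayed quotient is only the image of restriction rather than all of $\cR(\SO(2m))$. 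This is a defect in the lemma as stated rather than in your method; the paper's subsequent use is really about the restriction image from $\cR(\rO(n-1))$, for which your argument suffices.
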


\noindent{\bf Remark.} If $n = 2m$ or $n = 2m+1$, then $\cR(\SO(n)) = \ZZ[s_1,\dots,s_m]$,
and $\cR(\rO(n))$ is an extension by the quadratic character $\xi = s_n$ such that
$\xi|_{\SO(n)} = 1$.

\subsection*{Algebraic parametrization}

If $H$ is a subgroup of $G$, there is an induced map $\Cl(H) \rightarrow \Cl(G)$
on conjugacy classes and concomitant restriction homomorphism $\Res: \cR(G) \rightarrow \cR(H)$
on representation rings.  Applied to the standard representation of $\Sym{n}$ in
$\rO(n-1)$, the restriction homomorphism equips the representation
ring of $\cR(\Sym{n})$ with a surjective restriction map from $\cR(\rO(n-1))$,
giving an algebraic presentation of $\cR(\Sym{n})$ by polynomials in
$\ZZ[s_1,\dots,s_{n-1}]$ modulo the defining ideal $(s_ks_{n-1}-s_{n-k-1},s^2_{n-1} - 1)$.
Given a permutation group $G \subset \Sym{n}$, the subsequent restriction
captures a significant subring of $\cR_{\QQ}(G) \subset \cR(G)$.

As a tool to characterize permutation groups in $\Sym{n}$, for subgroups $G$ and $H$,
with $H \subseteq G \subseteq \Sym{n}$, we develop the {\it branching rules} ---
explicit forms for the decomposition
$$
\Res(\chi_i) = \sum_{j=1}^{n_i} a_{ij} \psi_j.
$$
of irreducible characters $\{\chi_1,\dots,\chi_r\}$ on $G$ in terms of the irreducible
characters $\{\psi_1,\dots,\psi_s\}$ on $H$.  In light of the algebraic parametrization
by $\ZZ[s_1,\dots,s_{n-1}]$, we deduce the kernel ideals $I_G \subseteq I_H$ for each
permutation group in the lattice (poset) of subgroups.  A basis of generators provides
test functions for membership in a given subgroup.  We develop the algorithmic details
later.

Using the Brauer-Klimyk formula (see Bump~\cite[Proposition 22.9]{Bump13}),
it is possible to develop recursive formulas for the character theory of
orthogonal groups, as done in Shieh~\cite{Shieh15,Shieh16} for $\USp(2m)$, and using
the algebraic presentation, to deduce recursive branching rules for
$\Res: \cR(\rO(n-1)) \rightarrow \cR(G)$.  Instead, we content ourselves with
the algebraic parametrization from $\cR(\rO(n-1))$ and exploit the well-established
computational character theory of permutation groups to develop branching
rules in the lattice of permutation subgroups of $\Sym{n}$.

\section{Representations of permutation groups}

Let $G$ be a {\it permutation group} --- a finite group equipped with an
embedding in $\Sym{n}$.  The {\it cycle type} of $g \in G$ is the multiset
of cardinalities of its orbits under the action of $\Sym{n}$ on $\{1,\dots,n\}$.
A multiset can be denoted by a tuple $(d_1,\dots,d_t)$ or a formal product
$m_1^{e_1} \cdots m_s^{e_s}$, where
$$
d_1 \le d_2 \le \cdots \le d_t \mbox{ or }
m_1 < \cdots < m_s \mbox{ such that }
\sum_{i=1}^t d_i = \sum_{i=1}^s e_i m_i = n.
$$
The cycle type is invariant under conjugation in $\Sym{n}$, thus the cycle
type is well-defined for the conjugacy class $\cC = \cC(g) \in \Cl(G)$,
where $\cC(g) = \{ xgx^{-1} \,:\, x \in G \}$.

\begin{lemma}
\label{lem:cycle_invars}
The map $\Cl(\Sym{n}) \longrightarrow \{ (d_1,\dots,d_t) \,:\, \sum_{i=1}^t d_i = n\}$
from conjugacy classes of $\Sym{n}$ to cycle types is a bijection.
\end{lemma}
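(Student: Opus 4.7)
The plan is to prove the map is well-defined, injective, and surjective. Write the map as $\phi : \Cl(\Sym{n}) \rightarrow \cP(n)$, where $\cP(n)$ denotes partitions of $n$ (encoded as non-decreasing tuples). Well-definedness was already noted in the preceding paragraph, since the cycle type is invariant under $\Sym{n}$-conjugation, so $\phi$ descends to conjugacy classes.

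For surjectivity, I would write down an explicit preimage. Given a partition $(d_1,\dots,d_t)$ with $\sum d_i = n$, define
$$
g = (1\,2\,\cdots\,d_1)(d_1+1\,\cdots\,d_1+d_2)\cdots(d_1+\cdots+d_{t-1}+1\,\cdots\,n),
$$
a product of disjoint cycles with the prescribed lengths. Then $\phi(\cC(g)) = (d_1,\dots,d_t)$.

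The main content is injectivity, which rests on the conjugation formula for cycles: for any cycle $\sigma = (a_1\,a_2\,\cdots\,a_k)$ and any $\tau \in \Sym{n}$,
$$
\tau \sigma \tau^{-1} = (\tau(a_1)\,\tau(a_2)\,\cdots\,\tau(a_k)).
$$
This is verified by checking the action on $\tau(a_i)$ and on points outside $\{\tau(a_1),\dots,\tau(a_k)\}$. Given $g, h \in \Sym{n}$ with the same cycle type, write both as products of disjoint cycles in the same length ordering,
$$
g = \prod_{i=1}^t (a_{i,1}\,a_{i,2}\,\cdots\,a_{i,d_i}),\qquad
h = \prod_{i=1}^t (b_{i,1}\,b_{i,2}\,\cdots\,b_{i,d_i}),
$$
(including fixed points as $1$-cycles, so that both products exhaust $\{1,\dots,n\}$). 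Define $\tau \in \Sym{n}$ by $\tau(a_{i,j}) = b_{i,j}$; this is a bijection on $\{1,\dots,n\}$ by construction. Applying the cycle conjugation formula cycle by cycle and using that disjoint cycles commute yields $\tau g \tau^{-1} = h$, hence $\cC(g) = \cC(h)$, proving injectivity.

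The only real obstacle is notational bookkeeping in the choice of the ordered decomposition into disjoint cycles, since the decomposition is unique only up to reordering of cycles of equal length and cyclic rotation within each cycle; but since $\tau$ is constructed to realize one explicit matching, this ambiguity does not affect the argument.
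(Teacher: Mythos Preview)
Your proof is correct and follows essentially the same approach as the paper's: surjectivity by exhibiting a permutation with prescribed cycle lengths, and injectivity by producing an explicit conjugator between two permutations of the same cycle type. The only cosmetic difference is that the paper phrases injectivity via the $n$-transitivity of $\Sym{n}$ on $\{1,\dots,n\}$, whereas you unpack this into the explicit cycle conjugation formula $\tau(a_1\,\cdots\,a_k)\tau^{-1}=(\tau(a_1)\,\cdots\,\tau(a_k))$; these are the same argument in slightly different clothing.
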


\begin{proof}
Clearly, giving a cyclic ordering to any partition of $\{1,\dots,n\}$ into
orbits determines an element of $\Sym{n}$, hence the map is surjective.
Moreover, by definition the symmetric group is $n$-transitive, conjugating
any cyclically ordered orbit partition to any another of the same cycle type.
Consequently the map is injective.
\end{proof}

\noindent{\bf Remark.}
For a permutation group $G \subset \Sym{n}$ the induced map $\Cl(G) \rightarrow
\Cl(\Sym{n})$ in general is neither injective nor surjective.  The failure of
injectivity means that the cycle type fails to distinguish the conjugacy classes.
We will later see this in the failure of $\cR(\Sym{n})$ to surject on $\cR(G)$.
In fact, the irreducible characters are known to form a basis of the class
functions on $G$ (cf. Serre~\cite[Theorem~6]{Serre71}), hence the failure
to separate conjugacy classes means that the restriction homomorphism from
$\cR(\Sym{n})$ does not surject on $\cR(G)$.
\vspace{2mm}

On the one hand, the cycle type of a conjugacy class characterizes the class.
On the other hand, the characteristic polynomial (hence its coefficients) is
class invariant of an orthogonal group element, and the permutation and
standard representations thus provide other class invariants.  We make this
association explicit.  Given $(d_1,\dots,d_t)$ be the cycle type of an
element $g \in \Sym{n}$.  It is easy to see that the characteristic polynomial
of the permutation representation of $g$ is
$$
P(x) = (x^{d_1}-1)\cdots(x^{d_t}-1) = (x^{m_1}-1)^{e_1} \cdots (x^{m_s}-1)^{e_s}.
$$
The eigenvalue on the trivial space is $1$, so the characteristic polynomial
in the standard representation is
$$
S(x) = \frac{P(x)}{(x-1)} = x^{n-1} - s_1 x^{n-2} + \cdots + (-1)^{n-1} s_{n-1},
$$
and $(s_1,\dots,s_{n-1})$ is the tuple of class invariants associated to the
the conjugacy class $\cC(g)$ under the standard representation in $\rO(n-1)$.
This gives the following lemma.

\begin{lemma}
\label{lem:symmetric_invars}
The map $\Cl(\Sym{n}) \rightarrow \ZZ^{n-1}$ from conjugacy classes to the
$(n-1)$-tuples $(s_1,\dots,s_{n-1})$ of coefficients of the characteristic
polynomial under the standard embedding is injective.
\end{lemma}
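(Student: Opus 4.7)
The plan is to reduce the statement to a purely combinatorial claim about partitions, by composing with the bijection of Lemma~\ref{lem:cycle_invars}. Since the tuple $(s_1,\dots,s_{n-1})$ determines the polynomial $S(x)$ and hence $P(x) = (x-1)S(x)$, and since a conjugacy class in $\Sym{n}$ is determined by its cycle type $(d_1,\dots,d_t)$, it suffices to prove that the map
$$
(d_1,\dots,d_t) \longmapsto P(x) = \prod_{i=1}^t (x^{d_i} - 1)
$$
from partitions of $n$ to $\ZZ[x]$ is injective.

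For this I would decompose $P(x)$ into cyclotomic factors. Writing the partition in exponent notation with $b_d = \#\{i : d_i = d\}$, the identity $x^d - 1 = \prod_{e \mid d} \Phi_e(x)$ yields
$$
P(x) = \prod_{d \ge 1} (x^d - 1)^{b_d} = \prod_{e \ge 1} \Phi_e(x)^{a_e},
\qquad\text{where } a_e = \sum_{\substack{d \ge 1 \\ e \mid d}} b_d.
$$
Because the cyclotomic polynomials $\Phi_e$ are pairwise distinct irreducibles in $\QQ[x]$, unique factorization recovers the exponents $a_e$ from $P(x)$. Möbius inversion on the divisibility poset then gives $b_e = \sum_{e \mid d \le n} \mu(d/e)\, a_d$, so the partition is reconstructed and the map is injective.

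The argument has no real obstacle: the content is the observation that the eigenvalues of a permutation matrix of cycle type $(d_1,\dots,d_t)$ are the union (with multiplicity) of $d_i$-th roots of unity for $i=1,\dots,t$, so the multiplicity of a primitive $e$-th root of unity as a root of $P(x)$ is precisely $a_e = \#\{i : e \mid d_i\}$. The remaining step is routine Möbius inversion. The only slight care needed is noting that the reduction $P(x) \leadsto S(x)$ loses no information, which follows since $(x-1) \mid P(x)$ (every cycle contributes an eigenvalue $1$), and that $n$ itself is recoverable from $(s_1,\dots,s_{n-1})$ as the degree of $P(x)/S(x)$'s defining ambient, fixing the range of summation in the inversion formula.
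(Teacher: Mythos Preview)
Your proof is correct and follows the same approach as the paper: reduce to cycle types via Lemma~\ref{lem:cycle_invars}, then use unique factorization in $\QQ[x]$ to recover the partition from $P(x)=\prod_i(x^{d_i}-1)$. The paper's proof is a one-line appeal to unique factorization; your cyclotomic decomposition and M\"obius inversion simply make explicit the step the paper leaves implicit.
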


\begin{proof}
By Lemma~\ref{lem:cycle_invars} the map from conjugacy classes to cycle types
is a bijection. However, by unique factorization in $\QQ[x]$, a polynomial
of the form $(x^{d_1}-1) \cdots (x^{d_t}-1)$ is uniquely determined by the
cycle type $(d_1,\dots,d_t)$, hence the map to its coefficients $(s_1,\dots,s_{n-1})$
is injective.
\end{proof}

\subsection*{Representation rings and character tables}

Let $G$ be a permutation group and let $\Cl(G) = \{\cC_1, \dots, \cC_h\}$,
and $\{\chi_1,\dots,\chi_h\}$ be its irreducible characters.
For a conjugacy class $\cC$, define the ideal
$$
\mm_\cC = \{ f \in \cR(G) \,:\, f(\cC) = 0 \}.
$$
such that the value $f(\cC)$ of a virtual character $f$ at $\cC$ is a
well-defined class in the residue class ring $\cR(G)/\mm_\cC$.
The character table of $G$ is typically represented as a matrix whose
$i$-th row is the evaluation vector
$
(\chi_i(\cC_1),\dots,\chi_i(\cC_h)).
$
With this notation, we interpret as the embedding of the character $\chi_i$ 
in the product ring, under the injection
$$
\cR(G) \longrightarrow \cR(G)/\mm_{\cC_1} \times \cdots \times \cR(G)/\mm_{\cC_h}.
$$
\begin{lemma}
The image of the homomorphism $\cR(G) \rightarrow
\cR(G)/\mm_{\cC_1} \times \cdots \times \cR(G)/\mm_{\cC_h}$
has finite index in its codomain.
\end{lemma}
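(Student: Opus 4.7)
I would establish the result by proving (i) injectivity of the map, and (ii) equality of $\ZZ$-ranks of domain and codomain; together these force the image to be a full-rank sublattice of a finitely generated $\ZZ$-module, hence of finite index.

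\textbf{Injectivity.} The kernel is $\bigcap_{i=1}^{h}\mm_{\cC_i}$, consisting of virtual characters that vanish on every conjugacy class. By Schur orthogonality, the irreducible characters $\chi_1,\dots,\chi_h$ are linearly independent as complex-valued class functions, so no nonzero $\ZZ$-linear combination of them can vanish on all of $\Cl(G)$, and this intersection is trivial.

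\textbf{Rank match.} The domain $\cR(G)$ is free of rank $h$ as a $\ZZ$-module. To see that the codomain has the same rank, I would pass to $\QQ$-coefficients and exploit that $\cR(G)\otimes\QQ$ is a semisimple commutative $\QQ$-algebra whose maximal ideals correspond to $\Gal(\QQbar/\QQ)$-orbits of $\Cl(G)$: if $\cC_i$ and $\cC_j$ are Galois-conjugate, then $\mm_{\cC_i}\otimes\QQ=\mm_{\cC_j}\otimes\QQ$. Applying the Chinese Remainder Theorem gives
\[
\cR(G)\otimes\QQ \;\isom\; \prod_{[\cC]}\bigl(\cR(G)/\mm_{[\cC]}\bigr)\otimes\QQ,
\]
where $[\cC]$ ranges over rational (Galois-orbit) conjugacy classes, each factor being a number field whose degree is the orbit size $|[\cC]|$; summing over orbits recovers $h$. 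The codomain of the lemma, indexed individually by each class, collapses to the same product after the orbit identification, giving matching $\QQ$-rank.

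\textbf{Conclusion.} Combining injectivity with equal $\QQ$-rank produces a full-rank sublattice embedding, so the image has finite index in the codomain; the index can be read off from the character table matrix $(\chi_i(\cC_j))$, which is nonsingular again by orthogonality.

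\textbf{Main obstacle.} The rank comparison is the principal subtlety. Individually, $\cR(G)/\mm_{\cC_i}$ may have $\ZZ$-rank strictly greater than $1$ when characters assume non-rational values: for example, for $G=\Alt{3}$ the quotient at a $3$-cycle class is $\ZZ[\zeta_3]$, of rank $2$, so the naive count for the codomain gives $1+2+2=5>3$. The correct matching relies on identifying $\mm_{\cC_i}\otimes\QQ=\mm_{\cC_j}\otimes\QQ$ for Galois-conjugate $\cC_i,\cC_j$ and recognizing the resulting redundancy in the product; making this precise for the codomain as literally written is where the heart of the argument lies.
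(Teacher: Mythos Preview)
Your strategy coincides with the paper's: pass to $\cR(G)\otimes\QQ$, use that it is an \'etale $\QQ$-algebra, and appeal to the Chinese Remainder decomposition into its residue fields. The paper's proof is a one-sentence compression of this---$\cR(G)$ is torsion-free, $\cR(G)\otimes\QQ$ is \'etale with the $(\cR(G)/\mm_{\cC_i})\otimes\QQ$ as component fields, hence finite index. You unpack the same idea into separate injectivity and rank-count steps.

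The difficulty you isolate under ``Main obstacle'' is genuine, and your proposed resolution (``collapses to the same product after the orbit identification, giving matching $\QQ$-rank'') does not hold. When $\cC_i$ and $\cC_j$ lie in the same Galois orbit one has $\mm_{\cC_i}=\mm_{\cC_j}$ already as ideals of $\cR(G)$: for $f\in\cR(G)$ the values $f(\cC_i)$ and $f(\cC_j)$ are Galois conjugates, so one vanishes iff the other does. Consequently the two factors $\cR(G)/\mm_{\cC_i}$ and $\cR(G)/\mm_{\cC_j}$ in the codomain are literally the same ring, the map lands in their diagonal, and tensoring with $\QQ$ does nothing to collapse the product---it retains both copies. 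In your own $\Alt{3}$ example the codomain has $\QQ$-dimension $1+2+2=5$ against $3$ for the domain, and the image has infinite index. Thus the lemma, read with the product over all $h$ classes, fails whenever $G$ has an irrational character; it holds as written precisely when every irreducible character is $\QQ$-valued (so the $\mm_{\cC_i}$ are pairwise distinct and each quotient is $\ZZ$), or equivalently if the product is taken over the distinct maximal ideals. The paper's brief argument implicitly sits in that regime, and so must yours.
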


\begin{proof}
Clearly $\cR(G)$ is torsion-free, since the image of a virtual character is
a subring of $\CC$.  Thus $\cR(G)$ embeds in $\cR(G) \otimes \QQ$, which is
an \'etale algebra, and $\cR(G)/\mm_{\cC_i} \otimes \QQ$ its component fields
(see Brakenhoff~\cite{Brakenhoff} for details). It follows that the index is
finite.
\end{proof}

More generally in the direction of the Lemma, Brakenhoff~\cite{Brakenhoff} finds
that the center of the group ring $\QQ[G]$ over $\QQ$ and the tensor product of
the representation ring $\cR(G) \otimes \QQ$ are related by Brauer equivalence.
We give two examples below.  In view of the restriction map from $\cR(\Sym{n})$
to $\cR(G)$, and since all characters on $\Sym{n}$ are rational, the image of
$\cR(\Sym{n}) = \cR_\QQ(\Sym{n})$ lies in the subring $\cR_\QQ(G) \subset \cR(G)$.
In the examples below, we illustrate the role of nontrivial Galois action and
of quadratic characters in failure of surjectivity of $\cR(\Sym{n})$ on $\cR(G)$
and on $\cR_\QQ(G)$.
In the next section we exploit the embedding by interpolating the character
table values by the polynomial presentation $\ZZ[s_1,\dots,s_{n-1}]
\rightarrow \cR_\QQ(G)$.

\subsubsection*{Orthogonality relations.}

The role of arithmetic statistics of $G$ comes from the orthogonality relations
for the irreducible characters. Let $\{\chi_1,\dots,\chi_h\}$ be the irreducible
characters for $G$, and $A(G)$ be the character matrix:
$$
A(G) = \left[\begin{array}{ccc}
\chi_1(\cC_1) & \cdots & \chi_1(\cC_h) \\
\vdots        &        & \vdots \\
\chi_h(\cC_1) & \cdots & \chi_h(\cC_h)
\end{array}\right]
$$
The orthogonality relations for characters (see Serre~\cite[Section~2.3]{Serre71}),
expressed in terms of group elements, reformulated in terms of conjugacy classes,
takes the form
$$
\delta_{ij} = \langle\chi_i,\chi_j\rangle_G :=
  \frac{1}{|G|} \sum_{g\in G} \chi_i(g) \overline{\chi_j(g)}
  = \sum_{k=1}^h \frac{|\cC_k|}{|G|} \chi_i(\cC_k) \overline{\chi_j(\cC_k)}.
$$
Set $D(G)$ to be the diagonal matrix with diagonal entries $(p_1,\dots,p_h)$,
where $p_k = |\cC_k|/|G|$ is the weight of the conjugacy class $\cC_k$.
The orthogonality relations are then expressed by the equality
$$
I_h = A(G) D(G) A(G)^\dagger,
$$
where $\dagger$ denotes the conjugate transpose. The matrix $D(G)$ can
be viewed as the inner product matrix of the Haar measure induced by $G$
on $\Cl(G)$.

\subsubsection*{Rational character table.}
Let $\chi$ be a character on $G$, let $m$ be the exponent of $G$, and
let $\cC = \cC(g)$ be a conjugacy class. As the trace of a representation
of $g$, the value $\chi(\cC)$ lies in $\ZZ[\zeta_m]$, since each of its
eigenvalues are in $\bbmu_m = \langle\zeta_m\rangle$.  We thus obtain two
actions of the Galois group $\Gal(\QQ(\zeta_m)/\QQ) \isom (\ZZ/m\ZZ)^*$.
Denote $\sigma: (\ZZ/m\ZZ)^* \rightarrow \Gal(\QQ(\zeta_m)/\QQ)$ the
isomorphism such that $\zeta_m^{\sigma(k)} = \zeta_m^k$. The first of
the actions is on conjugacy classes, by $\cC(g) \mapsto \cC(g^k)$, and
the second on characters by $\chi^{\sigma(k)}(\cC(g)) = \chi(\cC(g))^{\sigma(k)}$.
Considering the action on eigenvalues we see immediately that
$$
\chi^{\sigma(k)}(\cC(g)) = \chi(\cC(g^k)).
$$

\subsubsection*{Restriction from $\cR(\Sym{n})$.}
Only characters in the image of $\cR(\Sym{n})$ can be parametrized by polynomials
in $\ZZ[s_1,\dots,s_{n-1}]$ from the standard representation.
We note by example, that the pre-image of $\cC$ in $\Cl(\Sym{n})$ under the
induced map $\Cl(G) \rightarrow \Cl(\Sym{n})$ can split into an even number
of conjugacy class separated by a quadratic character not coming from $\Sym{n}$.
We observe this phenomenon for $G = D_4$ and $G = Q_8$ in the examples section
below.

\section{Algorithms for Galois representations}

In what follows we describe algorithms for testing equivalence of finite
Galois characters.  As principal application, we consider input $f(x)$
of degree $n$, determining a number field $K = \QQ[x]/(f(x))$, and
describe how to evaluate a sample set of primes $S$ at characters on
the permutation group $\cG(K)$. The approach is completely general,
allowing one to compare the set of characters on the absolute group $\cG$
mapping through permutation groups $\cG(K_1)$ and $\cG(K_2)$ determined
by number fields $K_1$ and $K_2$.

\subsection*{Factorization types of irreducible polynomials}

Consider an irreducible polynomial $f(x)$ in $\ZZ[x]$ of degree $n$,
set $K = \QQ[x]/(f(x))$ and let $L$ be its normal closure with
maximal order $\cO_L$. For a rational prime $p$ and prime $\fP$
over $p$ in $\cO_L$, the Frobenius lift $\Frob_\fP$ is the unique
element of the decomposition subgroup $D_\fP \subset G = \cG(K)$, such that
$$
\Frob_\fP(a) \equiv a^p \bmod \fP
$$
for all $a$ in $\cO_L$.
Denote by $\Frob_p$ the conjugacy class of $\Frob_\fP$ in $\Cl(G)$.

For $p$ not dividing $\disc(f(x))$ we define the {\it factorization type}
of $f(x) \bmod p$ to be the multiset of degrees of the factorization
of $f(x)$ in $\FF_p[x]$, which we may denote $(d_1,\dots,d_t)$, where
$d_1 \le \cdots \le d_t$ and $d_1 + \cdots + d_t = n$.  We can now identify
the data of the factorization type with the cycle type of the Galois group
$G = \cG(K)$ equipped with its embedding in $\Sym{n}$.

\begin{lemma}
The factorization type of $f(x) \bmod p$ is the cycle type of $\Frob_p \subset \cG(K)$.
\end{lemma}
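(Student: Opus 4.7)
The plan is to reduce modulo a prime $\fP$ of $\cO_L$ above $p$ and transport the action of the Frobenius on roots in characteristic zero to the action of the $p$-th power map on the reduced roots in the residue field, where the orbit decomposition is visibly the factorization of $f \bmod p$ into $\FF_p$-irreducibles.

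First I would verify that $p$ is unramified in $L$. Since $p \nmid \disc(f(x))$, the order $\ZZ[x]/(f(x))$ is unramified at $p$, hence $p$ is unramified in $K$. Because $L$ is the compositum of the Galois conjugates of $K$, unramifiedness propagates and $p$ is unramified in $L$. Consequently the decomposition group $D_\fP$ is cyclic, generated by $\Frob_\fP$, so the orbits of $D_\fP$ on the set $R = \{\alpha_1,\dots,\alpha_n\} \subset \cO_L$ of roots of $f$ coincide with the cycles of $\Frob_\fP$ acting on $R$, and these cycle lengths constitute the cycle type of $\Frob_p$.

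Next I would exploit separability: since $p \nmid \disc(f(x))$, the reduction $\bar f := f \bmod p$ is separable in $\FF_p[x]$, so its roots in $\bar k := \cO_L/\fP$ are pairwise distinct. The reduction map $\alpha_i \mapsto \bar\alpha_i$ therefore defines a bijection from $R$ to the root set of $\bar f$ in $\bar k$. This bijection is $D_\fP$-equivariant: the defining congruence $\Frob_\fP(\alpha) \equiv \alpha^p \pmod{\fP}$ shows that $\Frob_\fP$ acts on $\{\bar\alpha_i\}$ through the $p$-th power Frobenius $\phi$ of the extension $\bar k/\FF_p$.

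It remains to identify the $\phi$-orbits. If $\bar f = \prod_{i=1}^{t} \bar f_i$ is the factorization into distinct irreducibles of degrees $d_1,\dots,d_t$, then each $\bar f_i$ has $d_i$ roots in $\bar k$ forming a single $\phi$-orbit of length $d_i$, because the minimal polynomial over $\FF_p$ of any $\beta \in \bar k$ of degree $d$ is $\prod_{j=0}^{d-1}(x - \beta^{p^j})$. Composing the bijections identifies the multiset of cycle lengths of $\Frob_\fP$ on $R$ with the multiset $(d_1,\dots,d_t)$, which is the factorization type; as this multiset depends only on the conjugacy class $\Frob_p$, the result is independent of the choice of $\fP$. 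There is no serious obstacle in this argument; the only technical care is in establishing the $D_\fP$-equivariance of the reduction map, which as noted reduces directly to the defining congruence for $\Frob_\fP$.
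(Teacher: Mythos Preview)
Your argument is correct and complete. It differs from the paper's route in an interesting way. The paper argues via the intermediate field $K$: it invokes the Kummer--Dedekind correspondence $p\cO_K = \pp_1\cdots\pp_t$ with $\pp_k = (p,f_k(x))$ of residue degree $d_k = \deg(f_k)$, then lifts each $\pp_k$ to a prime $\fP_k$ of $\cO_L$ and reads off the orbit sizes of $\Frob_\fP$ from the residue degrees in the tower $\QQ \subset K \subset L$. Your proof instead bypasses $K$ entirely: you reduce the roots directly modulo a single prime $\fP$ of $\cO_L$, use separability to get a bijection with the roots of $\bar f$ in the residue field, and transport the $\Frob_\fP$-action to the $p$-th power map there, where the orbit decomposition is immediate from the factorization of $\bar f$ over $\FF_p$. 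Your approach is more elementary and self-contained (no splitting behavior in towers, no appeal to transitivity of the Galois action on primes), at the modest cost of checking the equivariance of the reduction map --- which, as you note, is essentially the defining property of $\Frob_\fP$. The paper's approach has the virtue of tying the statement to the standard dictionary between prime splitting and Frobenius cycle structure, which is conceptually natural in the broader context of the article.
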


\begin{proof}
The factorization $p\cO_K = \pp_1 \cdots \pp_t$ is determined from $f(x) \equiv f_1(x) \cdots f_t(x)
\bmod p$, with $\pp_k = (p,f_k(x))$ a prime of degree $d_k = \deg(f_k)$. The Galois group acts
transitively on primes of $\cO_L$ over $p$, and there exist conjugates $\fP_1,\dots,\fP_t$ over
$\pp_1,\dots,\pp_t$, from which we see that $d_k$ divides $\deg(\fP)$, and each $d_k$ is the
cardinality of an orbit of roots modulo $p$ under the action of $\Frob_\fP$.
\end{proof}

\subsection*{Character inner products as expectation.}

The factorization type of a polynomial gives a means of taking random samples
of character values $(s_1,\dots,s_{n-1})$ at a set $S$ of primes mapping to
the group $G$. Other data, for particular characters, may come from weight one
modular eigenforms, character sums, or Kronecker symbols.
Let $S$ be such a sample set of primes, and $\psi$, $\chi$
two characters which can be evaluated on $S$. We write $\psi(p)$ and $\chi(p)$
for the value of the character at a sample point.
We obtain an approximation for the orthogonal product $\langle\psi,\chi\rangle$
as the expectation of $\psi\overline{\chi}$:
$$
\langle\psi,\chi\rangle = \EE(\psi\overline{\chi})
  \sim \EE_S(\psi\overline{\chi}) = \frac{1}{|S|}\sum_{p\in S} \psi(p)\overline{\chi}(p).
$$
Assuming the multiplicity of each irreducible character in the support of
$\psi$ and $\chi$ is one, then $m = \langle\psi,\chi\rangle$ is an integer
counting the number of irreducible characters in the support of both
$\psi$ and $\chi$.  When $\psi$ and $\chi$ are irreducible, to determine
equality $\psi = \chi$, one needs only sufficient precision to distinguish
the one bit $\langle\psi,\chi\rangle = 0$ or $\langle\psi,\chi\rangle = 1$.

The interest in working with irreducible characters, or nearly irreducible
characters as captured by the image of restriction from $\cR(\Sym{n})$, is that
the variance of the character products $\psi\overline{\chi}$ is minimized,
and the number of primes needed to recognize convergence small, as observed
by Shieh~\cite{Shieh15,Shieh16} in the case of symplectic groups $\USp(2m)$
(see also Fit\'e and Guitart~\cite{Fite18}).

One should note that in view of classifying the Galois group, nonvanishing of an
element of the kernel ideal of the restriction $\cR(\Sym{n}) \rightarrow \cR(G)$
can be used to  provably exclude $G$ as a Galois group.
This was already observed by Pohst~\cite{Pohst96}, who proposed the use of
factorization types as a lower bound for the Galois group, and that for $n \ge 8$
the factorization types, and their probabilities, fail to separate groups.
This statement, however, concerns the data of the induced Haar measure on $\Cl(G)$,
and not that of the character table of $G$.
Precisely we have two data structures on $\Cl(G)$ at our disposal, that of
a probability space and of class functions (given by a character table):

\begin{itemize}
\item
$\Cl(G)$ with Haar measure $p:\Cl(G)\rightarrow\RR$, and
\item
$\CC^h = \Hom(\Cl(G),\CC)$ with orthonormal basis $\{\chi_1,\dots,\chi_h\}$.
\end{itemize}
Due to failure of surjectivity of the restriction homomorphism from $\cR(\Sym{n})$,
the subset of characters determined from the cycle types are unlikely to separate
groups for sufficiently large $n$.  Nevertheless, the joint data of Haar measure
and character table, plus the system of restriction maps coming from common
embeddings in $\Sym{n}$ gives more information than either the Haar measure or
character table alone.

\subsection*{Restriction kernel ideal}

To a conjugacy class $\cC$ for $\Sym{n}$ we associate an ideal $\mm_\cC$ in
$\ZZ[s_1,\dots,s_{n-1}]$ of the form
$$
\mm_\cC = (s_1-s_1(\cC),\dots,s_{n-1}-s_{n-1}(\cC)),
$$
where $(s_1(\cC),\dots,s_{n-1}(\cC))$ are the values of $s_i$ at $\cC$. Then
the kernel ideal for the restriction of $\cR(\Sym{n})$ to $\cR(G)$ is the
intersection ideal
$$
I(G) = \bigcap_{\cC\in\pi(\Cl(G))} \mm_{\cC},
$$
where $\pi: \Cl(G) \rightarrow \Cl(\Sym{n})$.

\subsubsection*{Example.}
Consider the restriction from $\cR(O(3))$ to $\cR(\Sym{4})$. Since
$$
\cR(O(3)) = \frac{\ZZ[s_1,s_2,s_3]}{(s_1s_3-s_2,s_3^2-1)}
$$
and the values of $(s_1,s_2,s_3)$ are in
$$
\{(3,3,1),\ (-1,-1,1),\ (0,0,1),\ (-1,1,-1),\ (1,-1,-1)\},
$$
we obtain a defining ideal of $\Sym{4}$ given by the additional generators:
$$
s_1(s_1+1)(s_1-s_3-2),\ s_1(s_1+1)(s_1-1)(s_1-3),\ (s_1+1)(s_1-1)(s_3-1).
$$
The map $\Cl(D_4) \rightarrow \Cl(\Sym{4})$ fails to surject on $(0,0,1)$, hence
there are only four maximal ideals in the intersection and the kernel ideal for
$\cR(\Sym{4}) \rightarrow \cR(D_4)$ is generated by:
$$
s_1^2 - s_1 - s_2 - s_3 - 2,\ s_2 - s_1 s_3,\ s_3^2 - 1.
$$
The first polynomial is not in the kernel ideal for $\cR(\Sym{4})$ and its
vanishing provides a test for $D_4$.  Geometrically, it means that the 
tensor square of the representation with trace $s_1$ decomposes into a
direct sum of representations with trace $s_1 + s_2 + s_3 + 2$.

\subsection*{Restriction homomorphism}

Let $H \subset G$ be permutation groups, and set $\ell = |\Cl(H)|$ and $h = |\Cl(G)|$
their cardinalities of their conjugacy class sets.
Suppose that $\{\psi_1,\dots,\psi_\ell\}$ and $\{\chi_1,\dots,\chi_h\}$ are the
irreducible characters, which are given by embeddings in $\CC^\ell$ and $\CC^h$,
respectively. We thus have isomorphisms
$$
\cR(H) = \bigoplus_{i=1}^\ell \ZZ\psi_i \longrightarrow \Lambda(H) \subset \CC^\ell,
\mbox{ and }
\cR(G) = \bigoplus_{j=1}^h \ZZ\chi_j \longrightarrow \Lambda(G) \subset \CC^h,
$$
where $\Lambda(H)$ and $\Lambda(G)$ are the lattices in $\CC^\ell$ and $\CC^h$
spanned by the rows of the character table.
The restriction homomorphism $\cR(G) \mapsto \cR(H)$ is induced by the map
$\pi:\Cl(H) \rightarrow \Cl(G)$, by
$$
\chi \longmapsto (\chi(\pi(\cC_1)),\dots,\chi(\pi(\cC_\ell))) \in \Lambda(H) \subset \CC^\ell.
$$
The linear transformation $\Lambda(G) \rightarrow \Lambda(H)$ gives the restriction
homomorphism as an integral $(h\times\ell)$-matrix with respect to the respective
bases of irreducible characters.  The rows of this matrix can be interpretted as
{\it branching rules}, giving the decomposition of an irreducible character on $G$
as a sum of irreducible characters on $H$.

Inside each $\Lambda(G)$ we have a sublattice (generally of lower rank) $\Lambda_\QQ(G)
= \Lambda(G) \cap \QQ^h$ of rational-valued characters.  We recall that for a
conjugacy class $\cC$ of group elements of order $m$, the value of $\chi(\cC)$ is a
sum of eigenvalues in $\QQ(\zeta_m)$.  We thus obtain an action by the Galois group
of a cyclotomic field on the irreducible characters.  As a consequence, the lattice
$\Lambda_\QQ(G)$ is generated by the sums over Galois orbits of irreducible characters.
Since these orbits are disjoint, this basis of rational characters remains orthogonal,
but not orthonormal, since $\langle\chi,\chi\rangle$ measures the cardinality of
the orbit (assuming $\chi$ is a sum of irreducible characters of multiplicity one).
On the other hand, the restriction images $\Res^G_H(\Lambda(G)) \subset \Lambda(H)$ and
$\Res^G_H(\Lambda_\QQ(G)) \subset \Lambda_\QQ(H)$ do not possess natural reduced
orthogonal bases.  In order to determine a generating set which is small with
respect to the orthogonality relations on characters, we need to apply a constrained
lattice reduction inside the submonoid of characters:
$$
\bigoplus_{j=1}^\ell \NN\psi_j \subset \bigoplus_{j=1}^\ell \ZZ\psi_j = \cR(H).
$$
Rather than a generic LLL algorithm, we need to carry out a structured lattice
reduction in the character monoid order to be able to invoke the heuristic
arguments for convergence of small characters.

\subsection*{Algebraic parametrization}
In order to interpret factorization types of polynomials (or splitting types of primes)
as conjugacy classes on which we can apply the class functions $s_1,\dots,s_{n-1}$,
we need to find an explicit algebraic parametrization
$$
\frac{\ZZ[s_1,\dots,s_{n-1}]}{I(\Sym{n})} \rightarrow \cR(\Sym{n})
\rightarrow \Res^{\Sym{n}}_G(\Lambda(\Sym{n})) \subseteq \Lambda(G)
$$
The presentation $\ZZ[s_1,\dots,s_{n-1}]/I(\Sym{n}) \rightarrow \cR(\Sym{n})$ comes
from the standard representation of $\Sym{n}$, and its composition into $\Lambda(\Sym{n})$
can be effectively computed. In order to lift characters in $\Lambda(\Sym{n})$ back to
representative polynomials in $(s_1,\dots,s_{n-1})$, we must invert
$$
\frac{\ZZ[s_1,\dots,s_{n-1}]}{I(\Sym{n})} \rightarrow \Lambda(\Sym{n}).
$$
As noted above, the isomorphism $\cR(\Sym{n}) \rightarrow \Lambda(\Sym{n})$ is obtained
by the Chinese remainder theorem.  More precisely, over $\QQ$, we obtain a product
decomposition of the \'etale algebra $\cR(\Sym{n}) \otimes \QQ$:
$$
\cR(\Sym{n})\otimes\QQ \longrightarrow
  \frac{\cR(\Sym{n})}{\mm_{\cC_1}} \otimes\QQ\times \cdots \times
  \frac{\cR(\Sym{n})}{\mm_{\cC_h}} \otimes\QQ \isom \QQ^h.
$$
under which $\cR(\Sym{n}) \isom \Lambda(\Sym{n}) \subseteq \ZZ^h$.  Since the generators
$s_1,\dots,s_{n-1}$ can be evaluated at conjugacy classes, we can evaluate a basis of
monomials modulo $I(\Sym{n})$ and invert a matrix to determine the pre-image
of a basis of irreducible characters.  The same applies to a basis of characters
in $\Res^{\Sym{n}}_G(\Lambda(\Sym{n}))$ modulo the restriction kernel $I(G)$.

\subsection*{Database of restriction-induction}

Databases of transitive permutation groups of degree up to 30 are available in
GAP~\cite{GAPGroup} and Magma~\cite{MagmaArticle,MagmaGroup}, computed by
Greg Butler, John McKay, Gordon Royle and Alexander Hulpke (see \cite{ButlerMcKay83},
\cite{Butler93}, \cite{Royle87}, \cite{CHM98}, \cite{Hulpke05}).
The above is intended to motivate an interest in a metastructure of the restriction
relations (and adjoint induction relations) between character rings $\cR(G)$, and for the
algebraic parametrizations arizing from the restriction homomorphism from orthogonal
groups.

\section{Explicit computations}

We illustrate the approach through arithmetic statistics of character theory by applying
the methods to groups of low degree.  First we analyze the dihedral and quaternionic
groups $D_4$ and $Q_8$ of order $8$, the smallest groups sharing the same character table.
Then we consider an example of a pair of permutation groups of degree $8$ and order $16$
whose cycle types and induced Haar measure on $\Sym{8}$-conjugacy classes are equal.
We show how an auxillary (sub)field suffices to distinguish the characters using joint
Frobenius cycle data.  In a final example, we treat different permutation representations
of $\Alt{5}$, to show how this approach can be used to establish the equivalence of the
absolute Galois representations determined by different fields.
\vspace{2mm}

\subsection*{Dihedral and quaternionic groups of order $8$}
The groups $D_4$ and $Q_8$, known to share the same character table, can nevertheless
be separated by the restriction data coming from a permutation representation.
We first recall that the common character table takes the form
$$
A(G) = \left[\begin{array}{@{\;}r@{\;}r@{\;}r@{\;}r@{\;}r@{\;}}
1 &  1 &  1 &  1 &  1\\
1 &  1 & -1 &  1 & -1\\
1 &  1 &  1 & -1 & -1\\
1 &  1 & -1 & -1 &  1\\
2 & -2 &  0 &  0 &  0
\end{array}\right]
$$
with weights $(1/8,1/8,1/4,1/4,1/4)$ on the conjugacy classes.
\ignore{
The group algebra $\QQ[D_4]$ decomposes as
$$
\QQ[D_4] \isom \QQ \times \QQ \times \QQ \times \QQ \times M_2(\QQ),
$$
while the group algebra $\QQ[Q_8]$ decomposes as
$$
\QQ[Q_8] \isom \QQ \times \QQ \times \QQ \times \QQ \times \HH,
$$
where $\HH$ is the quaternion algebra over $\QQ$ ramified at $2$ and $\infty$.
}
The semisemimple group algebras $\QQ[D_4]$ and $\QQ[Q_8]$ have Wedderburn
decompositions
$$
\QQ[D_4] \isom \QQ \times \QQ \times \QQ \times \QQ \times M_2(\QQ),
\mbox{ and }
\QQ[Q_8] \isom \QQ \times \QQ \times \QQ \times \QQ \times \HH,
$$
where $\HH$ is the quaternion algebra over $\QQ$ ramified at $2$ and $\infty$.
These decompositions correspond to the four linear characters and sole degree-2
irreducible representation.

Only the former group, $D_4$, embeds in $\Sym{4}$, which shows that the permutation
embedding contains distinguishing information not in the character table.
We make explicit the above approach through character theory for the degree-$4$
permutation representation.  Let $\{1,\chi_1,\chi_2,\chi_3,\chi_4\}$ be
a basis of characters, with $\chi_1$, $\chi_2$, and $\chi_3 = \chi_1\chi_2$
quadratic linear characters, and $\chi_4$ of degree $2$.  The standard
representation of $\Sym{4}$ in $\rO(3)$ provides irreducible characters
$$
\{1,s_1,s_2,s_3,s_1^2 - s_1 - s_2 - 1\}
$$
where $s_3$ is the quadratic determinant character, $s_1$ and $s_2 = s_1s_3$
are degree-$3$ representations, and the last one is of degree $2$.
Computing the inner product matrices for these characters on $\Sym{4}$ and $D_4$,
we obtain
$$
\left[\begin{array}{@{\;}c@{\;\;}c@{\;\;}c@{\;\;}c@{\;\;}c@{\;}}
1 & 0 & 0 & 0 & 0\\
0 & 1 & 0 & 0 & 0\\
0 & 0 & 1 & 0 & 0\\
0 & 0 & 0 & 1 & 0\\
0 & 0 & 0 & 0 & 1
\end{array}\right]
\mbox{ and }
\left[\begin{array}{@{\;}c@{\;\;}c@{\;\;}c@{\;\;}c@{\;\;}c@{\;}}
1 & 0 & 0 & 0 & 1\\
0 & 2 & 1 & 0 & 0\\
0 & 1 & 2 & 0 & 0\\
0 & 0 & 0 & 1 & 1\\
1 & 0 & 0 & 1 & 2
\end{array}\right]\!\cdot
$$
For example, this was the output to the nearest integer for the expectation method
on a sample size of 16 unramified primes, for the polynomials $x^4 + x + 1$ and
$x^4 - 2x^2 + 2$ with respective Galois groups $\Sym{4}$ and $D_4$.

One identifies the polynomial expression $\chi = s_1^2 - s_1 - s_2 - 1$ for the
irreducible degree-$2$ character $\chi$ on $\Sym{4}$, which decomposes into a direct
sum $1 + s_3$ on $D_4$, from which we deduce that $s_1^2 - s_1 - s_2 - s_3 - 2$
is in the kernel ideal $I(D_4)$. Similarly, we read from the inner products
$\langle{s_1,s_1}\rangle = \langle{s_2,s_2}\rangle = 2$ and $\langle{s_1,s_2}\rangle = 1$
on $D_4$ that each of $s_1$ and $s_2$ decompose into two irreducible
characters, which share a common irreducible summand.  The restriction homomorphism
from $\cR(\Sym{4})$ thus captures
$$
1, s_1 = \chi_1 + \chi_4, s_2 = \chi_2 + \chi_4, s_3 = \chi_3.
$$
The restriction fails to span all characters, because the conjugacy classes are not
separated by characters on $\Sym{4}$. Indeed the cycle types of the five conjugacy
classes in $\Cl(D_4)$ are $1^4$, $1^2 2^1$, $2^2$, $2^2$, and $4^1$, and hence
the two classes of cycle type $2^2$ map to the same class in $\Cl(\Sym{4})$.

The missing character $\chi_1$ is easily recovered.  It arises from the quadratic
subfield (here with defining polynomial $x^2 - 2x + 2$), which can be expressed as
a Legendre symbol.  In terms of the basis of characters $\{1,s_1,s_2,s_3,\chi_1\}$,
we now obtain an inner product matrix:
$$
\left[
\begin{array}{cccc|c}
1 & 0 & 0 & 0 & 0 \\
0 & 1 & 0 & 0 & 0 \\
0 & 0 & 2 & 1 & 1 \\
0 & 0 & 1 & 2 & 0 \\ \hline
0 & 0 & 1 & 0 & 1
\end{array}\right]
$$
which can be reduced to an orthonormal basis for $\cR(D_4)$.
\vspace{1mm}

Since both $D_4$ and $Q_8$ admit permutation representations of degree $8$, we carry
out a similar analysis of the permutation representations of degree $8$ for $D_4$
and $Q_8$, given by
$$
D_4 \isom \left\langle
    \begin{array}{@{}l@{}}
    (1, 8)(2, 7)(3, 4)(5, 6),\\
    (1, 2)(3, 5)(4, 6)(7, 8),\\
    (1, 6)(2, 4)(3, 8)(5, 7)\end{array}\right\rangle
\mbox{ and }
Q_8 \isom \left\langle
    \begin{array}{@{}l@{}}
    (1,2,4,7)(3,6,8,5),\\
    (1,3,4,8)(2,5,7,6)
    \end{array}\right\rangle\cdot
$$
The cycle types $1^8$, $2^4$, $4^2$ arise with probabilities $(1/8,5/8,1/4)$ in $D_4$ whereas
in $Q_8$, theses same types have probabilities $(1/8,1/8,3/4)$.
Both groups embed in $\Alt{8} \subset \SO(7)$, hence the character rings are parametrized
by $\cR(\SO(7)) \isom \ZZ[s_1,s_2,s_3]$ ($s_7 = 1$ and $s_4 = s_3$, $s_5 = s_2$, $s_6 = s_1$).
Since the cycle types are the same, the kernel ideals agree, but the Haar measures differentiate
the groups. However, a naive tabulation of the probabilities gives a poor empirical invariant.
In fact, computing these probabilities is tantamount to evaluating the expectations of the
idempotents $e_1$, $e_2$, $e_3$ under the isomorphism
$$
\cR(G) \otimes \QQ = \frac{\QQ[s_1,s_2,s_3]}{I(G) \otimes \QQ} \longrightarrow
  \frac{\cR(G) \otimes \QQ}{\mm_{\cC_1} \otimes \QQ} \times
  \frac{\cR(G) \otimes \QQ}{\mm_{\cC_2} \otimes \QQ} \times
  \frac{\cR(G) \otimes \QQ}{\mm_{\cC_3} \otimes \QQ} \isom \QQ \times \QQ \times \QQ.
$$

To express this computation in the character ring framework, we scale by the group order to
have integer values.  As a general strategy for a group $G \subset \Sym{n}$ this amounts to
asking whether the scaled idempotents converge to
$$
( \langle|G|e_1,1\rangle,\dots\langle|G|e_s,1\rangle) = (|\cC_1|,\dots,|\cC_s|),
$$
where $\cC_i$ are the $\Sym{n}$-conjugacy classes for $G$.

Let $\{1,\chi_1,\chi_2,\chi_3,\psi\}$ be a basis of irreducible characters for $D_4$, and
$\{1,\chi_1',\chi_2',\chi_3',\psi'\}$ be a basis of irreducible characters for $Q_8$.
The parametrization gives a $\QQ$-basis $\{1,s_1,s_2\}$ and an idempotent basis
$\{e_1,e_2,e_3\}$ which are characteristic functions for the evaluations on conjugacy classes.
A reduced basis for the image of $\cR(\Sym{8})$ in $\cR(D_4)$ is $\{1,\sigma_1,\sigma_2\}$,
described as follows in these respective bases:
$$
\begin{array}{cccc}
D_4      & \{1,s_1,s_2\}      & \{e_1,e_2,e_3\}  & \{1,\chi_1,\chi_2,\chi_3,\psi\} \\ \hline
1        & 1                  & e_1 + e_2 + e_3  & 1\\
\sigma_1 & -s_1 + s_2/2 - 1/2 & 2e_1 - e_2 + e_3 & \chi_1 + \psi\\
\sigma_2 & 2s_1 - s_2/2 + 1/2 & 4e_1 - 2e_3      & \chi_1 + \chi_2 + \chi_3
\end{array}
$$
Similarly, a reduced basis for the image of $\cR(\Sym{8})$ in $\cR(Q_8)$ is $\{1,\tau_1,\tau_2\}$,
expressed in the respective bases as follows:
$$
\begin{array}{cccc}
Q_8    & \{1,s_1,s_2\}      & \{e_1,e_2,e_3\}   & \{1,\chi_1',\chi_2',\chi_3',\psi'\} \\ \hline
1      & 1                  & e_1 + e_2 + e_3   & 1\\
\tau_1 & -s_1 + s_2/2 - 3/2 & 2e_1 - 2e_2       & \psi'\\
\tau_2 & 3s_1 - s_2 + 3     & 3e_1 + 3e_2 - e_3 & \chi_1' + \chi_2' + \chi_3'
\end{array}
$$
Relative to the parametrizations from $\cR(\SO(7))$, the bases $(\sigma_1,\sigma_2)$
and $(\tau_1,\tau_2)$ are related by
$(\sigma_1,\sigma_2) = (\tau_1+1,\tau_1+\tau_2-1)$, and inversely
$(\tau_1,\tau_2) = (\sigma_1-1,\sigma_1+\sigma_2+2)$. we thus express $(8e_1,8e_2,8e_3$
in the respective bases:
$$
\begin{array}{cccc}
       & \{1,s_1,s_2\}   & \{1,\sigma_1,\sigma_2\}   & \{1,\tau_1,\tau_2\}\\ \hline
8e_1   & s_1 + 1         & 1 + \sigma_1 + \sigma_2   & 1 + 2\tau_1 + \tau_2\\
8e_2   & 5s_1 - 2s_2 + 7 & 5 - 3\sigma_1 + \sigma_2  & 1 - 2\tau_1 + \tau_2\\
8e_3   & -6s_1 + 2s_2    & 2 + 2\sigma_1 - 2\sigma_2 & 6 - 2\tau_2
\end{array}
$$
giving inclusions of submodules
$
\langle{8e_1,8e_2,8e_3}\rangle \subset
\langle{1,\sigma_2,\sigma_3}\rangle =
\langle{1,\tau_2,\tau_3}\rangle \subset
\langle{e_1,e_2,e_3}\rangle$.

Computing the expectations of the test functions $\{1,\sigma_1,\sigma_2\}$,
for $D_4$ on polynomials with Galois groups $G = D_4$ or $Q_8$,
the Gram matrix $M(G) = (\EE(\sigma_i\sigma_j))$ ($\sigma_0 = 1$) takes the form
$$
M(G) = \left[
\begin{array}{@{\;}c@{\;\;}c@{\;\;}c@{\;}} 1 & 0 & 0\\ 0 & 2 & 1\\ 0 & 1 & 3\end{array}\right]
\mbox{ where } G = D_4 \mbox{ and otherwise }
\left[
\begin{array}{@{}r@{\;\;\;}r@{\,}r@{\;}} 1 & 1 &-1\\ 1 & 2 & 0\\-1 & 0 & 5\end{array}\right]\!\!\cdot
$$
With respect to test functions $\{1,\tau_1,\tau_2\}$ for $Q_8$, the Gram matrices are
$$
M(G) = \left[
\begin{array}{@{\;}c@{\;\;}c@{\;\;}c@{\;}} 1 & 0 & 0\\ 0 & 1 & 0\\ 0 & 0 & 3\end{array}\right]
\mbox{ where } G = Q_8 \mbox{ and otherwise }
\left[
\begin{array}{@{}r@{\,}r@{\,}r@{\;}} 1 &-1 & 2\\-1 & 3 &-3\\ 2 &-3 & 7\end{array}\right]\!\!\cdot
$$
It should be clear that the full Gram matrix gives a more complete picture of the orthogonality
relations of charactes than the triple of inner products
$(\langle{8e_1,1}\rangle),(\langle{8e_2,1}\rangle),(\langle{8e_3,1}\rangle)$,
which is just one linear combination of the rows in the above Gram matrices.

In the next section, we show that the choice of reduced basis for the target group gives
a better set of test functions, converging more rapidly to the asymptotic Gram matrix.
With respect to the polynomials $x^8 + 6x^4 + 1$ of Galois group $D_4$ and
$x^8 - 12x^6 + 36x^4 - 36x^2 + 9$ of Galois group $Q_8$, we obtain reasonably good
convergence (to within a half integer) with the first 80 primes.

\subsection*{Non distinguished representations of degree $8$}
The first example of nonisomorphic permutation representations not distinguished
by their cycle types and Haar measure are the degree-8 groups of order $16$
denoted {\tt 8T10} and {\tt 8T11} (see the LMFDB~\cite{LMFDB} Galois groups
database). Specifically we define the representative groups
$$
\begin{array}{l}
G_0 = \langle{(1,2,3,8)(4,5,6,7), (1,5)(3,7)}\rangle\mbox{ and }\\ 
G_1 = \langle{(1,3,5,7)(2,4,6,8), (1,4,5,8)(2,3,6,7), (1,5)(3,7)}\rangle
\end{array}
$$
whose character tables are given by
$$
A(G_0) =
\left[\begin{array}{@{\;}*{10}{r@{\,}}r@{\;}}
1&  1&  1&  1&  1&  1&  1&  1&  1&  1\\
1&  1&  1&  1& -1& -1& -1& -1&  1&  1\\
1&  1&  1&  1& -1& -1&  1&  1& -1& -1\\
1&  1&  1&  1&  1&  1& -1& -1& -1& -1\\
1& -1& -1&  1&  1& -1&  i& -i& -i&  i\\
1& -1& -1&  1& -1&  1& -i&  i& -i&  i\\
1& -1& -1&  1& -1&  1&  i& -i&  i& -i\\
1& -1& -1&  1&  1& -1& -i&  i&  i& -i\\
2& -2&  2& -2&  0&  0&  0&  0&  0&  0\\
2&  2& -2& -2&  0&  0&  0&  0&  0&  0
\end{array}\right]
\mbox{ and }
A(G_1) =
\left[\begin{array}{@{\;}*{10}{r@{\,}}r@{\;}}
1&  1&   1&  1&  1&  1&  1&  1&  1&  1\\
1&  1&   1&  1& -1& -1&  1& -1& -1&  1\\
1&  1&  -1& -1& -1& -1& -1&  1&  1&  1\\
1&  1&  -1& -1&  1&  1& -1& -1& -1&  1\\
1&  1&   1&  1&  1& -1& -1&  1& -1& -1\\
1&  1&   1&  1& -1&  1& -1& -1&  1& -1\\
1&  1&  -1& -1& -1&  1&  1&  1& -1& -1\\
1&  1&  -1& -1&  1& -1&  1& -1&  1& -1\\
2& -2& -2i& 2i&  0&  0&  0&  0&  0&  0\\
2& -2&  2i&-2i&  0&  0&  0&  0&  0&  0
\end{array}\right]
$$
\ignore{
K<i> := QuadraticField(-1);
D := DiagonalMatrix(K,[1/16,1/16,1/16,1/16,1/8,1/8,1/8,1/8,1/8,1/8]);
A := Matrix([
    [K| 1,  1,  1,  1, 1, 1,  1,  1,  1,  1],
    [K| 1,  1,  1,  1,-1,-1,  1, -1, -1,  1],
    [K| 1,  1, -1, -1,-1,-1, -1,  1,  1,  1],
    [K| 1,  1, -1, -1, 1, 1, -1, -1, -1,  1],
    [K| 1,  1,  1,  1, 1,-1, -1,  1, -1, -1],
    [K| 1,  1,  1,  1,-1, 1, -1, -1,  1, -1],
    [K| 1,  1, -1, -1,-1, 1,  1,  1, -1, -1],
    [K| 1,  1, -1, -1, 1,-1,  1, -1,  1, -1],
    [K| 2, -2,-2*i,2*i,0, 0,  0,  0,  0,  0],
    [K| 2, -2,2*i,-2*i,0, 0,  0,  0,  0,  0] ]);
Abar := Parent(A)![ Trace(c)-c : c in Eltseq(A) ];
A*D*Transpose(Abar);}%
with respective probabilities $(1/16,1/16,1/16,1/16,1/8,1/8,1/8,1/8,1/8,1/8)$.
We note that the first 8 characters are linear, and the latter two are of
degree $2$.  The linear characters admit a group structure, isomorphic to
$C_2 \times C_4$ and $C_2^3$, respectively.  We denote the characters by
$
\{1,\chi_1,\chi_2,\chi_3,\rho_1,\bar{\rho}_1,\rho_2,\bar{\rho}_2,\psi_1,\psi_2\}
\mbox{ and }
\{1,\xi_1,\xi_2,\xi_3,\xi_4,\xi_5,\xi_6,\xi_7,\psi,\bar{\psi}\}.
$
In the groups $G_0$ and $G_1$ the character of the standard representation
(of degree 7) decomposes as
$$
s_1 = \chi_1 + \rho_1 + \bar{\rho}_1 + \psi_1 + \psi_2
\mbox{ and }
s_1 = \xi_1 + \xi_2 + \xi_2 + \psi + \bar{\psi},
$$
respectively, but the individual characters in $s_1$ are not separated.

Given the obvious Galois action (on the codomain field $\QQ(i)$), we see that
the subrings $\cR_\QQ(G_0)$ and $\cR_\QQ(G_1)$ have different ranks, $8$ and~$9$.
On the other hand, the images of the restriction homomorphism from $\cR(\Sym{8})$
have rank $4$ in each of $\cR(G_0)$ and $\cR(G_1)$, generated for instance by
$\{1,s_1,s_2,s_3\}$.
Moreover, since exactly the same four cycle types occur, with the same
probabilities $(1/16, 5/16, 1/8, 1/2)$, the characters in the image of restriction
from $\cR(\Sym{8})$ to $\cR(G_0)$ and $\cR(G_1)$ can not be differentiated.

Let $K_0$ and $K_1$ be number fields whose normal closures have respective Galois
groups $G_0$ and $G_1$.
In order to distinguish these fields, it suffices to construct missing characters
from the linear character groups.  In fact these number fields have nontrivial
automorphism groups, isomorphic to $V_4$ and $C_4$, respectively.  This induces
respective subfield lattices of the forms
$$
\begin{tikzcd}[cramped,sep=small]
    & K_0 &     & & &    & K_1 &     \\
F_0' \arrow[ur,dash] & F_0 \arrow[u,dash] & F_0'' \arrow[ul,dash] & & &   & F_1 \arrow[u,dash] &     \\
    & G_0 \arrow[ul,dash] \arrow[u,dash] \arrow[ur,dash] & & &   & G_1' \arrow[ur,dash] & G_1 \arrow[u,dash] & G_1'' \arrow[ul,dash]\\
    & \QQ \arrow[u,dash] &     & & &   & \QQ \arrow[ul,dash] \arrow[u,dash] \arrow[ur,dash]
\end{tikzcd}
$$
For each field we recover a significant subgroup of the linear character groups
from the quartic and quadratic characters.  In fact there is a unique
cyclic subfield $F_0/\QQ$ in $K_0$ which recovers the characters
$\rho_1$, $\bar{\rho}_2$, and $\chi_1 = \rho_1^2$. (The other fields $F_0'$ and
$F_0''$ are non-normal.)  And there exists a unique biquadratic field $F_1/\QQ$
in $K_1$ which yields the quadratic characters $\xi_1$, $\xi_2$, $\xi_3$.
The pairs $(K_0,F_0)$ and $(K_1,F_1)$ give characters on the pairs of permutation
groups  of degree $8$ and~$4$, $(G_0,G_0/H_0 \isom C_4)$ and $(G_1,G_1/H_1 \isom V_4)$,
such that the joint factorization types of Frobenius characters separate
the Galois structures.

\subsection*{Representations of $\Alt{5}$}
We denote the irreducible characters of the alternating groups $\Alt{5}$
by $\{1,\chi_1,\chi_2,\chi_3,\chi_4\}$, where $\chi_1$ is the character of the degree-$4$
standard representation, $\chi_2$ is the character of a degree-$5$ representation, and $\chi_3$
and $\chi_4$ are the conjugate characters of degree-$3$ icosohedral representations over
$\QQ(\sqrt{5})$. The rational representations are thus spanned by the orthogonal characters
$\{1,\chi_1,\chi_2,\chi_3+\chi_4\}$ of degrees $1$, $4$, $5$, and $6$.

On the other hand, the permutation representation of $\Alt{5}$ in $\Sym{5}$ gives a
parametrization by
$$
\cR(\SO(4)) = \frac{\ZZ[s_1,s_2,s_3,s_4]}{(s_1-s_3,s_4-1)} \isom \ZZ[s_1,s_2],
$$
and while $|\Cl(\Alt{5})| = 5$, there are two conjugacy classes which map to the same
cycle type $5^1$ in $\Cl(\Sym{5})$.  Thus the restriction from $\cR(\Sym{5})$ gives a
basis of four independent characters, and we identify:
$$
(1,s_1,s_1^2 - s_2 - s_1 - 1,s_2) = (1,\chi_1,\chi_2,\chi_3+\chi_4).
$$
In addition to its degree-$5$ permutation representation, $\Alt{5}$ admits a faithful
permutation representation in $\Sym{6}$.
In the restriction of $\ZZ[s_1,s_2] \isom \cR(\SO(5))$ we recognize the same characters
equipped with a different parametrization
$$
(1, s_1^2 - 2 s_1 - s_2 - 1, s_1, s_2 - \chi_1) = (1, \chi_1, \chi_2, \chi_3 + \chi_4).
$$

Consider the number fields, each with Galois group $\Alt{5}$, defined by polynomials
$$
\begin{array}{l}
  f = x^5 - 5 x^4 + 48 x^3 + 28 x^2 + 5 x - 1,\\
  g = x^{6} + 4 x^{5} + 10 x^4 - 10 x^3 + 17 x^2 + 10 x + 1
\end{array}
$$
constructed as subfields of the same normal closure. Although not isomorphic,
we can construct the inner product matrix of the same characters set
$\{1,\chi_1,\chi_2,\chi_3+\chi_4\}$ on $\Alt{5}$ with respect to its
different embeddings in $\Sym{5}$ and $\Sym{6}$.
Jointly evaluating the characters on factorization types of $f$ or $g$
with those of either $f$ or $g$, yields the same diagonal inner product
matrix ($= \mathrm{diag}(1,1,1,2)$ to nearest integer).
This gives a means of recognizing the same character of the absolute Galois group
via different presentations. The arithmetic statistic approach through character
theory gives a powerful tool to not only characterize Galois groups, but to recognize
equivalence of finite representations of the absolute Galois group $\cG$ which
may arise in different contexts.

\section{Variance, covariance and convergence}

The focus on irreducible characters provides, on the one hand, a theoretic framework
for understanding the arithmetic statistics of Frobenius distributions.
On the computational side, irreducible characters provide test functions with optimal
convergence properties.
Naively, the orthogonality relations for a system $\{\chi_1,\dots,\chi_r\}$ of
irreducible characters as test functions, it suffices to recognize the integer
$\langle\chi_i,\chi_j\rangle = \delta_{ij}$ to one bit of precision.
Furthermore, $\chi_i \ne 1$ and $\chi_j \ne 1$ the inner products
$
\langle\chi_i,1\rangle = \langle\chi_j,1\rangle = 0
$
imply that $\chi_i$ and $\chi_j$ have mean $0$, hence we can interpret
$$
\EE_S(\chi_i\overline{\chi}_j) = \frac{1}{|S|}\sum_{p\in S} \chi_i(p)\overline{\chi}_j(p)
$$
as a (sample) variance ($i=j$) or covariance ($i\ne j$) of the sample $S$, we
see that the use of irreducible characters (or of reduced characters in $\cR(G)$
as the next best approximation when irreducible characters are not in the
restriction image from $\cR(\Sym{n})$) minimizes the variance of the test
functions, and orthogonality minimizes the covariance.

We can illustrate the convergence properties with the lattice of subgroups between
the representation of $\PSL_2(\FF_7)$ on $\PP^1(\FF_7)$ and $\Sym{8}$:
$$
\begin{tikzcd}
\PGL_2(\FF_7) \isom G_1                               \arrow[rr,hook] &                    & \Sym{8}\\
\PSL_2(\FF_7) \isom H_1 \arrow[u,shift right=2.5em,hook] \arrow[r,hook]  & H_2 \arrow[r,hook] & \Alt{8} \arrow[u,hook]
\end{tikzcd}
$$
with respective orders $|H_1| = 168$, $|G_1| = 336$, and $|H_2| = 1344$.

Let $h(G)$ be the number of conjugacy classes of $G$, equal to the number of irreducible characters
and to the rank of $\cR(G)$;
let $r(G)$ be the number of characters irreducible over $\QQ$, equal to the rank of $\cR_\QQ(G)$;
and let $s(G)$ the rank of the image of the restriction of $\cR(\Sym{n})$ to $\cR(G)$.
For each of the groups we give the respective numbers $h(G)$, $r(G)$ and $s(G)$, as well as
a representative polynomial (from the LMFDB~\cite{LMFDB}) with Galois group $G$.
$$
\begin{array}{ccccl}
  G     & h(G) & r(G) & s(G) & f_G(x)\\ \hline
\Sym{8} &  22  &  22  &  22  & x^8 - x - 1\\
\Alt{8} &  14  &  12  &  12  & x^8 - 2x^7 + 3x^5 - 5x^4 + 2x^3 + 2x^2 - x + 1\\
  G_1   &   9  &   8  &   8  & x^8 - x^7 + x^6 + 4x^5 - x^4 - 3x^3 + 5x^2 - 2x + 1\\
  H_2   &  11  &  10  &   8  & x^8 - 4x^7 + 8x^6 - 9x^5 + 7x^4 - 4x^3 + 2x^2 + 1\\
  H_1   &   6  &   5  &   5  & x^8 - 4x^7 + 7x^6 - 7x^5 + 7x^4 - 7x^3 + 7x^2 + 5x + 1\\
\end{array}
$$
For the generic group $\Sym{n}$ the characters $(1,s_1,\dots,s_{n-1})$ are irreducible
on $\Sym{n}$ and form a system of test functions for $\Sym{n}$.  On $\Alt{n}$ and its
subgroups the relations $s_{n-1-i} = s_i$ hold, and so the characters $(1,s_1,\dots,s_m)$,
where $n = 2m+1$ or $2m+2$, form a system of test functions for $\Alt{n}$.

The Gram matrices $M(G)$ with respect to the test characters $(1,s_1,\dots,s_7)$
for $G = \Sym{8}$, $\Alt{8}$, and $G_1$, respectively are:
$$
M(\Sym{8}) =
\left[\begin{array}{@{\,}*{7}{c@{\;\;}}c@{\,}}
1 & 0 & 0 & 0 & 0 & 0 & 0 & 0\\
0 & 1 & 0 & 0 & 0 & 0 & 0 & 0\\
0 & 0 & 1 & 0 & 0 & 0 & 0 & 0\\
0 & 0 & 0 & 1 & 0 & 0 & 0 & 0\\
0 & 0 & 0 & 0 & 1 & 0 & 0 & 0\\
0 & 0 & 0 & 0 & 0 & 1 & 0 & 0\\
0 & 0 & 0 & 0 & 0 & 0 & 1 & 0\\
0 & 0 & 0 & 0 & 0 & 0 & 0 & 1
\end{array}\right]\!\!\ccomma\;
M(\Alt{8}) =
\left[\begin{array}{@{\,}*{7}{c@{\;\;}}c@{\,}}
1 & 0 & 0 & 0 & 0 & 0 & 0 & 1\\
0 & 1 & 0 & 0 & 0 & 0 & 1 & 0\\
0 & 0 & 1 & 0 & 0 & 1 & 0 & 0\\
0 & 0 & 0 & 1 & 1 & 0 & 0 & 0\\
0 & 0 & 0 & 1 & 1 & 0 & 0 & 0\\
0 & 0 & 1 & 0 & 0 & 1 & 0 & 0\\
0 & 1 & 0 & 0 & 0 & 0 & 1 & 0\\
1 & 0 & 0 & 0 & 0 & 0 & 0 & 1
\end{array}\right]\!\!\ccomma\;
M(G_1) =
\left[\begin{array}{@{\,}*{7}{c@{\;\;}}c@{\,}}
1 & 0 & 0 & 0 & 1 & 0 & 0 & 0\\
0 & 1 & 0 & 1 & 1 & 1 & 0 & 0\\
0 & 0 & 3 & 2 & 1 & 1 & 1 & 0\\
0 & 1 & 2 & 6 & 4 & 1 & 1 & 1\\
1 & 1 & 1 & 4 & 6 & 2 & 1 & 0\\
0 & 1 & 1 & 1 & 2 & 3 & 0 & 0\\
0 & 0 & 1 & 1 & 1 & 0 & 1 & 0\\
0 & 0 & 0 & 1 & 0 & 0 & 0 & 1
\end{array}\right]\!\!\cdot
$$
For the indicated representative polynomials, characters $(\chi_1,\dots,\chi_r)$ 
and set of non-ramified primes $S$, we define the error matrix:
$
Z_S(G) = \EE_S(\chi_i\overline{\chi}_j) - M(G)
$
and for an $(r\times r)$-matrix $Z = (z_{ij})$ and define the normalized $\ell_p$-norms
$$
||Z||_p = \Big(\frac{1}{r^2}\sum_{i,j} |z_{ij}|^p\Big)^{1/p} \mbox{ and } ||Z||_\infty = \max_{i,j}\{ |z_{ij}| \}.
$$
In particular we need $||Z_S(G)||_\infty < 0.50$ in order for the approximation to round
to $M(G)$.  We say that a sequence stably converges to $M(G)$ after $m$ terms if
$||Z_S(G)||_\infty < 0.50$ for all initial segments $S$ of the sequence with $|S| > m$.

Setting $S$ equal to the first $128k$ non-ramified primes, in the case of $\Sym{8}$ and $\Alt{8}$
the symmetric functions give good convergence in the $\ell_2$, $\ell_8$ and $\ell_\infty$-norms
to $M(G)$ on small sample sets consisting of the first $128k$ non-ramified primes.
$$
\begin{array}{r@{\;}*{5}{c@{\,}}}
   & \multicolumn{5}{c}{\Sym{8}} \\
   & ||Z_S(G)||_2 & & ||Z_S(G)||_8 & & ||Z_S(G)||_\infty \\ \hline
 1:& 0.104870 & < & 0.184799 & < & 0.257812\\
 2:& 0.104915 & < & 0.197659 & < & 0.269531\\
 3:& 0.093747 & < & 0.189553 & < & 0.255208\\
 4:& 0.072267 & < & 0.138632 & < & 0.191406\\
 5:& 0.063890 & < & 0.112834 & < & 0.151562\\
 6:& 0.063620 & < & 0.115167 & < & 0.171875\\
 7:& 0.052897 & < & 0.083975 & < & 0.116071\\
 8:& 0.045921 & < & 0.070367 & < & 0.097656\\
\end{array}
\quad
\begin{array}{r@{\;}*{5}{c@{\,}}}
   & \multicolumn{5}{c}{\Alt{8}} \\
   & ||Z_S(G)||_2 & & ||Z_S(G)||_8 & & ||Z_S(G)||_\infty \\ \hline
 1:& 0.080624 & < & 0.112569 & < & 0.140625\\
 2:& 0.099134 & < & 0.174740 & < & 0.226562\\
 3:& 0.074997 & < & 0.128586 & < & 0.166666\\
 4:& 0.057739 & < & 0.092246 & < & 0.119140\\
 5:& 0.058826 & < & 0.128167 & < & 0.181250\\
 6:& 0.053728 & < & 0.112338 & < & 0.158854\\
 7:& 0.049278 & < & 0.098191 & < & 0.138392\\
 8:& 0.036335 & < & 0.065900 & < & 0.092773\\
\end{array}
$$
Even with sample size 128, we obtain a close approximation to the correct
Gram matrix, and the convergence remains stable.  In contrast, for the
group $G_1$ (of index 120 in $\Sym{8}$) taking increments of size $1024$
we find that $2^{14} = 1024 \cdot 16$ primes gives an exact approximation
of $M(G_1)$ (in the $\ell_\infty$-norm) but that at least $1024 \cdot 22$
primes are needed for stable convergence:
$$
\begin{array}{r@{\;}*{5}{c@{\,}}}
   & \multicolumn{5}{c}{G_1} \\
   & ||Z_S(G)||_2 & & ||Z_S(G)||_8 & & ||Z_S(G)||_\infty \\ \hline
 1:& 0.876885 & < & 1.841975 & < & 2.686523\\
 2:& 0.229706 & < & 0.475835 & < & 0.701171\\
 3:& 0.437539 & < & 0.862551 & < & 1.233723\\
 4:& 0.542897 & < & 1.080542 & < & 1.525878\\
 5:& 0.267850 & < & 0.528893 & < & 0.756054\\
 6:& 0.365931 & < & 0.733534 & < & 1.035156\\
 7:& 0.199105 & < & 0.407255 & < & 0.580217\\
 8:& 0.229675 & < & 0.471416 & < & 0.672363\\
 9:& 0.111158 & < & 0.231270 & < & 0.333224\\
\end{array}
\quad
\begin{array}{r@{\;}*{5}{c@{\,}}}
   & \multicolumn{5}{c}{G_1} \\
   & ||Z_S(G)||_2 & & ||Z_S(G)||_8 & & ||Z_S(G)||_\infty \\ \hline
10:& 0.187304 & < & 0.375945 & < & 0.533105\\
11:& 0.211544 & < & 0.429012 & < & 0.613725\\
12:& 0.231261 & < & 0.465137 & < & 0.665364\\
13:& 0.279154 & < & 0.560439 & < & 0.800030\\
14:& 0.201504 & < & 0.399819 & < & 0.572195\\
15:& 0.189139 & < & 0.375454 & < & 0.534960\\
16:& 0.178182 & < & 0.348732 & < & 0.493652\\
17:& 0.143345 & < & 0.282338 & < & 0.397633\\
18:& 0.136637 & < & 0.266879 & < & 0.378417\\
\end{array}
$$
Extending the computation further, we find that the apparent stable convergence fails
when $||Z_S(G_1)||_\infty > 0.50$ for $|S| = 1024\cdot k$ for $19 \le k \le 21$ and
again in the range $45 \le k \le 48$.

Passing to a basis of rational irreducible characters ($r(G_1) = s(G_1)$), the
rational character table $A(G_1)$ and the inner product matrix $D(G_1)$ of the Haar
measure on conjugacy classes are respectively
$$
A(G_1) =
\left[\begin{array}{@{}c@{\;}*{6}{r@{\;}}r@{\;}}
 1 &  1 &  1 &  1 &  1 &  1 &  1 &  1\\
 1 &  1 & -1 &  1 &  1 & -1 &  1 & -1\\
 6 & -2 &  0 &  0 &  2 &  0 & -1 &  0\\
12 &  4 &  0 &  0 &  0 &  0 & -2 &  0\\
 7 & -1 &  1 &  1 & -1 &  1 &  0 & -1\\
 7 & -1 & -1 &  1 & -1 & -1 &  0 &  1\\
 8 &  0 & -2 & -1 &  0 &  1 &  1 &  0\\
 8 &  0 &  2 & -1 &  0 & -1 &  1 &  0
\end{array}\right] \mbox{ and }
D(G_1) =
\frac{1}{336}
\left[\begin{array}{@{\;}*{7}{c@{\;\;}}c@{\;}}
1 &  0 &  0 &  0 &  0 &  0 &  0 &  0\\
0 & 21 &  0 &  0 &  0 &  0 &  0 &  0\\
0 &  0 & 28 &  0 &  0 &  0 &  0 &  0\\
0 &  0 &  0 & 56 &  0 &  0 &  0 &  0\\
0 &  0 &  0 &  0 & 42 &  0 &  0 &  0\\
0 &  0 &  0 &  0 &  0 & 56 &  0 &  0\\
0 &  0 &  0 &  0 &  0 &  0 & 48 &  0\\
0 &  0 &  0 &  0 &  0 &  0 &  0 & 84
\end{array}\right]\!\!\ccomma
$$
which determine the diagonalized matrix 
$M(G_1) = A(G_1) D(G_1) A(G_1)^t = \mathrm{diag}(1,1,1,2,1,1,1,1)$
with respect to the rational irreducible characters.
With respect to this basis, in increments of $128k$ primes, we find stable
convergence after just $512 = 128\cdot 4$ primes:
\vspace{-1mm}
$$
\begin{array}{r@{\;}*{5}{c@{\,}}}
   & \multicolumn{5}{c}{G_1} \\
   & ||Z_S(G)||_2 & & ||Z_S(G)||_8 & & ||Z_S(G)||_\infty \\ \hline
 1:& 0.191903 & < & 0.557482 & < & 0.937500\\
 2:& 0.107457 & < & 0.204107 & < & 0.312500\\
 3:& 0.111166 & < & 0.316320 & < & 0.531250\\
 4:& 0.085609 & < & 0.199992 & < & 0.335937\\
 5:& 0.087717 & < & 0.208395 & < & 0.350000\\
 6:& 0.094278 & < & 0.217121 & < & 0.364583\\
 7:& 0.103194 & < & 0.236602 & < & 0.397321\\
 8:& 0.110885 & < & 0.249000 & < & 0.417968\\
\end{array}
\quad
\begin{array}{r@{\;}*{5}{c@{\,}}}
   & \multicolumn{5}{c}{G_1} \\
   & ||Z_S(G)||_2 & & ||Z_S(G)||_8 & & ||Z_S(G)||_\infty \\ \hline
 9:& 0.114006 & < & 0.234514 & < & 0.392361\\
10:& 0.116967 & < & 0.233938 & < & 0.390625\\
11:& 0.120169 & < & 0.241507 & < & 0.403409\\
12:& 0.090920 & < & 0.197313 & < & 0.330729\\
13:& 0.093108 & < & 0.180276 & < & 0.300480\\
14:& 0.070129 & < & 0.145311 & < & 0.243303\\
15:& 0.074861 & < & 0.160193 & < & 0.268750\\
16:& 0.030534 & < & 0.066387 & < & 0.111328
\end{array}
$$
For the subgroup chain $H_1 \subset H_2 \subset \Alt{8}$, starting with the characters
$(1,s_1,s_2,s_3)$, irreducible on $\Alt{8}$, we find a similar analysis.
In particular, the Gram matrices with respect to this basis are
$$
M(\Alt{8}) =
\left[\begin{array}{@{\,}*{3}{c@{\;\;}}c@{\,}}
1 & 0 & 0 & 0\\
0 & 1 & 0 & 0\\
0 & 0 & 1 & 0\\
0 & 0 & 0 & 1
\end{array}\right]\!\!\ccomma\;\;
M(H_2) =
\left[\begin{array}{@{\,}*{3}{c@{\;\;}}c@{\,}}
1 & 0 & 0 & 0\\
0 & 1 & 0 & 0\\
0 & 0 & 1 & 0\\
0 & 0 & 0 & 3
\end{array}\right]\!\!\ccomma\;\;
M(H_1) =
\left[\begin{array}{@{\,}*{3}{c@{\;\;}}c@{\,}}
1 & 0 & 0 & 1\\
0 & 1 & 1 & 2\\
0 & 1 & 4 & 3\\
1 & 2 & 3 & 10
\end{array}\right]\!\!\cdot
$$
In the former two cases, the characters are orthogonal and irreducible
or nearly so ($s_3$ decomposes as a sum of three distinct irreducibles
on $H_2$), and convergence is relatively good.
In constrast, the Gram matrix $M(H_1)$ has determinant $14$, and far from
being orthogonal or irreducible (except for $1$ and $s_1$) on $H_1$.
In increments of $1024$, we find stable convergence only after $2^{15}
= 1024 \cdot 32$ primes:
\vspace{-3mm}
$$
\begin{array}{r@{\;}*{5}{c@{\,}}}
   & \multicolumn{5}{c}{H_1} \\
   & ||Z_S(G)||_2 & & ||Z_S(G)||_8 & & ||Z_S(G)||_\infty \\ \hline
 1:& 1.300776 & < & 2.685076 & < & 3.787109\\
 2:& 0.457035 & < & 0.943691 & < & 1.331054\\
 3:& 0.316304 & < & 0.671333 & < & 0.948242\\
 4:& 0.149549 & < & 0.327977 & < & 0.463623\\
\longversion{%
\multicolumn{1}{c}{\vdots} & \vdots &   & \vdots   &   & \vdots \\}{%
 5:& 0.209291 & < & 0.449120 & < & 0.634570\\
 6:& 0.250692 & < & 0.529368 & < & 0.747558\\
 7:& 0.281838 & < & 0.585979 & < & 0.826869\\
 8:& 0.354998 & < & 0.731739 & < & 1.031982\\
 9:& 0.271338 & < & 0.559605 & < & 0.789279\\
10:& 0.375442 & < & 0.771937 & < & 1.088476\\
11:& 0.232194 & < & 0.486329 & < & 0.686523\\
12:& 0.112442 & < & 0.240716 & < & 0.340087\\}
13:& 0.201940 & < & 0.417409 & < & 0.588792\\
14:& 0.219831 & < & 0.449876 & < & 0.634137\\
15:& 0.207462 & < & 0.427431 & < & 0.602799\\
16:& 0.170705 & < & 0.352046 & < & 0.496520
\end{array}
\quad
\begin{array}{r@{\;}*{5}{c@{\,}}}
   & \multicolumn{5}{c}{H_1} \\
   & ||Z_S(G)||_2 & & ||Z_S(G)||_8 & & ||Z_S(G)||_\infty \\ \hline
17:& 0.162082 & < & 0.331846 & < & 0.467773\\
18:& 0.249476 & < & 0.507743 & < & 0.715332\\
19:& 0.260497 & < & 0.533048 & < & 0.751336\\
20:& 0.250136 & < & 0.514311 & < & 0.725195\\
\longversion{%
\multicolumn{1}{c}{\vdots} & \vdots &   & \vdots   &   & \vdots \\}{%
21:& 0.200098 & < & 0.413150 & < & 0.582728\\
22:& 0.211578 & < & 0.432938 & < & 0.610218\\
23:& 0.203890 & < & 0.413032 & < & 0.581606\\
24:& 0.162417 & < & 0.325936 & < & 0.458455\\
25:& 0.124521 & < & 0.245254 & < & 0.343945\\
26:& 0.153018 & < & 0.302437 & < & 0.424429\\
27:& 0.210064 & < & 0.418420 & < & 0.587962\\
28:& 0.188783 & < & 0.372945 & < & 0.523367\\}
29:& 0.183952 & < & 0.364100 & < & 0.511112\\
30:& 0.193960 & < & 0.384122 & < & 0.539257\\
31:& 0.148770 & < & 0.290129 & < & 0.406060\\
32:& 0.132390 & < & 0.258615 & < & 0.362091
\end{array}
$$
Going further one finds that the $\ell_\infty$-norm gradually decreases and does indeed
stay below $0.50$ after this point.
In contrast, in terms of the basis $(1,\chi_1=\varphi+\bar\varphi,\chi_2,\chi_3,\chi_4)$
of irreducible characters over $\QQ$, of degrees $(1,6,6,7,8)$ given by
$$
\begin{array}{ll}
\chi_1 = (4 s_2 + 3 s_3 - s_1 s_2 - 4 s_1 - 2)/2, & \chi_3 = s_1,\\
\chi_2 = (2 s_2 + 5 s_3 - s_1 s_2 - 6 s_1 - 4)/4, & \chi_4 = (s_1 s_2 + 2 s_1 + 2 - 2 s_2 - 3 s_3)/2,
\end{array}
$$
the test characters stable converge to $M(H_1)$ after only $128$ primes, with
results here in increments of $128$ primes:
\vspace{-3mm}
$$
\begin{array}{r@{\;}*{5}{c@{\,}}}
   & \multicolumn{5}{c}{H_1} \\
   & ||Z_S(G)||_2 & & ||Z_S(G)||_8 & & ||Z_S(G)||_\infty \\ \hline
 1:& 0.227868 & < & 0.301886 & < & 0.406250\\
 2:& 0.225747 & < & 0.296604 & < & 0.398437\\
 3:& 0.127307 & < & 0.165588 & < & 0.216145\\
 4:& 0.149822 & < & 0.191605 & < & 0.250000\\
 5:& 0.166819 & < & 0.214155 & < & 0.271875\\
 6:& 0.085019 & < & 0.114926 & < & 0.148437\\
 7:& 0.101179 & < & 0.132950 & < & 0.166294\\
 8:& 0.114860 & < & 0.148922 & < & 0.193359
\end{array}
\quad
\begin{array}{r@{\;}*{5}{c@{\,}}}
   & \multicolumn{5}{c}{H_1} \\
   & ||Z_S(G)||_2 & & ||Z_S(G)||_8 & & ||Z_S(G)||_\infty \\ \hline
 9:& 0.064413 & < & 0.088651 & < & 0.114583\\
10:& 0.079219 & < & 0.104501 & < & 0.132812\\
11:& 0.091419 & < & 0.119029 & < & 0.154829\\
12:& 0.056475 & < & 0.076844 & < & 0.097656\\
13:& 0.047871 & < & 0.066901 & < & 0.086538\\
14:& 0.041653 & < & 0.062817 & < & 0.083705\\
15:& 0.029993 & < & 0.041051 & < & 0.053125\\
16:& 0.041465 & < & 0.054989 & < & 0.069335
\end{array}
$$
These convergence results give empirical support to the principle of using
irreducible characters as test functions, based on the theoretical
interpretation of inner product relations on characters as variance and covariance.
Moreover, when using irreducible characters, the number of primes necessary
to recognize the Gram matrix associated to a Galois group is strikingly small.

\section{Asymptotics in the degree}

In analyzing the character theory of a permutation group of large degree,
one must avoid certain bottlenecks in the complexity. First the number of
transitive permutation groups is too large to enumerate, and so clearly
the poset must be navigated in a lazy fashion.
Second, the number of conjugacy classes (hence of irreducible characters)
for $\Sym{n}$ is too large to enumerate. For the generic groups $\Sym{n}$
and $\Alt{n}$, the characters $(1,s_1,\dots,s_{n-1})$ and $(1,s_1,\dots,s_m)$,
where $n = 2m+1$ or $2m+2$, give a subset of rational irreducible test
functions (when $n = 2m+2$, the character $s_m$ is the sum of two characters
on $\Alt{n}$, conjugate over a quadratic field).
In general the number of conjugacy classes is the partition number $p(n)$,
whose asymptotic growth is known by Hardy and Ramanujan~\cite{HR1918} to be
\vspace{-2mm}
$$
p(n) \sim \frac{1}{4n\sqrt{3}}\exp\left(\pi\sqrt{\frac{2n}{3}}\right)\!\cdot
$$
In particular, we will treat a nontrivial example of degree $120$ (and $240$)
despite the large size $p(120) = 1844349560$ (and $p(240) = 105882246722733$)
of the corresponding partition numbers.
Finally, computation of the kernel ideal of the restriction $\cR(O(n-1))
\rightarrow \cR(G)$ by Groebner basis algorithms is prohibitively expensive,
even if the $s(G)$ points in the kernel can be computed.

Polynomials with interesting Galois groups of large degree, outside the
generic groups $\Sym{n}$ and $\Alt{n}$ and cyclic and dihedral groups
$C_n$ and $D_n$ rely on specific constructions.
We consider such an example of Jouve, Kowalski and Zywina~\cite{JKZ2008},
a polynomial $f(x)$ of degree $240$ with Galois group the Weyl group $W(E_8)$
of the lattice $E_8$, of order $696729600$.  In contrast to the large number
of conjugacy classes of $\Sym{240}$, the number of conjugacy classes of
$W(E_8)$ is $112$, and the restriction homomorphism from $\cR(\Sym{240})$
has full rank.  We take the quotient of order $348364800$ by its center, which
is the Galois group of the degree $120$ polynomial $g(x)$ such that $f(x) = g(x^2)$.
The quotient group $G = W(E_8)/Z(W(E_8))$ has $67$ conjugacy classes, all
characters are rational, and the restriction homomorphism from $\cR(\Sym{120})$
is a subring of rank $65$. We consider the $18$ absolutely irreducible rational
characters in the image. In increments of $256$ primes, we compute the convergence
to the Gram matrix $A(G)$ for these $18$ characters to $2^{13} = 256 \cdot 32$
primes:
$$
\begin{array}{r@{\;}*{5}{c@{\,}}}
   & \multicolumn{5}{c}{W(E_8)/Z(W(E_8))} \\
   & ||Z_S(G)||_2 & & ||Z_S(G)||_8 & & ||Z_S(G)||_\infty \\ \hline
 1:& 0.512041 & < & 1.947691 & < & 3.843750\\
 2:& 0.256200 & < & 0.868283 & < & 1.609375\\
 3:& 0.180087 & < & 0.525172 & < & 0.929687\\
 4:& 0.251753 & < & 0.848164 & < & 1.571289\\
\longversion{%
\multicolumn{1}{c}{\vdots} & \vdots &   & \vdots   &   & \vdots \\}{%
 5:& 0.206391 & < & 0.654058 & < & 1.199218\\
 6:& 0.177971 & < & 0.537867 & < & 0.981770\\
 7:& 0.160677 & < & 0.453450 & < & 0.818080\\
 8:& 0.141711 & < & 0.379362 & < & 0.668945\\
 9:& 0.133769 & < & 0.330092 & < & 0.571180\\
10:& 0.126439 & < & 0.299904 & < & 0.495312\\
11:& 0.153668 & < & 0.422025 & < & 0.744318\\
12:& 0.147983 & < & 0.379336 & < & 0.666015\\}
13:& 0.161766 & < & 0.376064 & < & 0.648137\\
14:& 0.151537 & < & 0.350967 & < & 0.611049\\
15:& 0.139688 & < & 0.325884 & < & 0.550000\\
16:& 0.128557 & < & 0.298173 & < & 0.504638\\
\end{array}
\quad
\begin{array}{r@{\;}*{5}{c@{\,}}}
   & \multicolumn{5}{c}{W(E_8)/Z(W(E_8))} \\
   & ||Z_S(G)||_2 & & ||Z_S(G)||_8 & & ||Z_S(G)||_\infty \\ \hline
17:& 0.122505 & < & 0.279019 & < & 0.473345\\
18:& 0.118703 & < & 0.265146 & < & 0.452473\\
19:& 0.114018 & < & 0.254474 & < & 0.432360\\
20:& 0.110361 & < & 0.248728 & < & 0.442968\\
\longversion{%
\multicolumn{1}{c}{\vdots} & \vdots &   & \vdots   &   & \vdots \\}{%
21:& 0.124927 & < & 0.313813 & < & 0.584821\\
22:& 0.121894 & < & 0.304305 & < & 0.563742\\
23:& 0.119424 & < & 0.296752 & < & 0.546025\\
24:& 0.111043 & < & 0.265830 & < & 0.483561\\
25:& 0.108012 & < & 0.258921 & < & 0.462968\\
26:& 0.105207 & < & 0.252862 & < & 0.449969\\
27:& 0.117958 & < & 0.310375 & < & 0.580584\\
28:& 0.114853 & < & 0.299234 & < & 0.557756\\}
29:& 0.110513 & < & 0.283699 & < & 0.530980\\
30:& 0.108019 & < & 0.275711 & < & 0.514713\\
31:& 0.105191 & < & 0.262830 & < & 0.491053\\
32:& 0.102228 & < & 0.251891 & < & 0.468505
\end{array}
$$
Extending the computation further suggests that the convergence to $M(G)$ is stable
for $m > 2^{13}$.

\ignore{
Indeed:
$$
\begin{array}{r@{\;}*{5}{c@{\,}}}
   & \multicolumn{5}{c}{W(E_8)/Z(W(E_8))} \\
   & ||Z_S(G)||_2 & & ||Z_S(G)||_8 & & ||Z_S(G)||_\infty \\ \hline
33:& 0.100301 & < & 0.246619 & < & 0.456202\\
34:& 0.098624 & < & 0.241885 & < & 0.445312\\
35:& 0.097133 & < & 0.239099 & < & 0.437276\\
36:& 0.095606 & < & 0.235593 & < & 0.426432\\
37:& 0.095199 & < & 0.234092 & < & 0.420396\\
38:& 0.094019 & < & 0.232147 & < & 0.410053\\
39:& 0.092383 & < & 0.228793 & < & 0.403445\\
40:& 0.091207 & < & 0.227619 & < & 0.398730\\
41:& 0.090263 & < & 0.227170 & < & 0.403772\\
42:& 0.089389 & < & 0.226393 & < & 0.406436\\
43:& 0.086814 & < & 0.218609 & < & 0.377997\\
44:& 0.093970 & < & 0.247417 & < & 0.455255\\
45:& 0.092438 & < & 0.241997 & < & 0.442708\\
46:& 0.090766 & < & 0.235752 & < & 0.431216\\
47:& 0.089782 & < & 0.231993 & < & 0.421043\\
48:& 0.089241 & < & 0.231894 & < & 0.418294\\
\end{array}
$$
}

\section{Conclusion}

A standard tool in Galois group computation is to recognize the probable group from
an analysis of Frobenius cycle types.  We use an explicit polynomial parametrization
of the character ring to identify the irreducible characters in the restriction from
orthogonal groups and subsequently from the symmetric group.
As in the thesis work of Shieh~\cite{Shieh15,Shieh16}, with the view to classifying Sato-Tate
groups, it is recognized that the irreducible characters on the target group provide
optimal test functions for recognizing (or rejecting) a given group coming from a
Galois representation.
We develop this perspective in the application to the parametrized representation
rings of finite groups, with associated lattice structure.
Although we focus on Galois groups arising from splitting fields of polynomials over
$\QQ$, the same methods apply to Galois representations coming form $L$-series and
modular forms, families of exponential sums, and global fields of any characteristic.

At a higher level, the approach through character theory and arithmetic statistics
lets us identify when Frobenius distributions of different degrees admit a common
Galois subrepresentation.  Examples arise in the form of fields with isomorphic
normal closures, as described in the above examples of $\Alt{5}$ representations, but
more generally one can recognize whether two normal fields admit a common subfield.
In this framework orthogonality relations of characters are measured by correlations
of Frobenius distributions associated to different representations of the absolute
Galois group. This perspective has promising potential for the computational
investigation of Galois representations.

\subsection*{Acknowledgements.}
The author thanks Fernando Rodriguez--Villegas for suggesting the specialization to
finite groups of the author's work with Yih-Dar Shieh and Gilles Lachaud, on explicit
character theory of orthogonal groups, and for providing notes of his talk in Leiden
on the analysis of Galois groups by explicit character theory (see also the 2012
bachelor's thesis of van~Bommel~\cite{vanBommel}).
Thanks go also to Claus Fieker for discussions of his algorithm and code in Magma for
constructing subfields of the normal closure of a given number field, used in building
test examples of number fields with common Galois closure.
Finally, the author thanks an anonymous referee for suggesting the permutation groups
{\tt 8T10} and {\tt 8T11}, as the first example (of lowest degree) for which the cycle
distributions fail to distinguish the groups.  This work is dedicated to the memory
of Gilles Lachaud and our conversations on explicit character theory of compact Lie
groups.

\end{document}